\numberwithin{equation}{section}
\newtheorem{thm}{Theorem}[section]
\newtheorem{lemma}[thm]{Lemma}
\newtheorem{cor}[thm]{Corollary}
\newtheorem{prop}[thm]{Proposition}
\theoremstyle{definition}
\theoremstyle{remark}
\newtheorem{remark}[thm]{Remark}
\renewcommand{\S}{\mathfrak S}
\newcommand{\s}{\sigma}
\newcommand\Z{{\mathbb{Z}}}
\newcommand\PP{{\mathbb{P}}}
\newcommand\N{{\mathbb{N}}}
\newcommand\wh{\widehat}
\newcommand\bars{{\rm bar}}
\newcommand\NC{{\rm NC}}
\newcommand\bq{\begin{equation}}
\newcommand\eq{\end{equation}}
\newcommand\beq{\begin{eqnarray*}}
\newcommand\eeq{\end{eqnarray*}}
\newcommand\ben{\begin{enumerate}}
\newcommand\een{\end{enumerate}}
\newcommand\bit{\begin{itemize}}
\newcommand\eit{\end{itemize}}
\newcommand\des{{\rm des}}
\newcommand\asc{{\rm asc}}
\newcommand\exc{{\rm exc}}
\newcommand\inv{{\rm inv}}
\newcommand\maj{{\rm maj}}
\newcommand\ai{{\rm ai}}
\newcommand\aid{{\rm aid}}
\newcommand\sg{{\mathfrak S}}
\newcommand\Des{{\rm DES}}
\newcommand\Exc{{\rm EXC}}
\newcommand\Cov{{\rm Cov}}
\newcommand\NDA{{\rm NDA}}
\newcommand\NDD{{\rm NDD}}
\newcommand\cha{{\rm cocharge}}
\newcommand\pc{{\rm PC}}
\newcommand\pf{{\rm PF}}
\newcommand\wcomp{{\rm WComp}}
\def\wh{\widehat}
\def\hz{\hat 0}
\def\rh{\tilde{H}}
\def\rk{{\sf r}{\sf k}}
\def\zz{{\mathbb Z}}
\def\nn{{\mathbb N}}
\def\ff{{\mathbb F}}
\def\pp{{\mathbb P}}
\def\dd{{\mathcal D}}
\def\nn{{\mathbb N}}
\def\psx{{\mathcal P}_{\sf stab}}
\def\hz{\hat{0}}
\begin{document}

\title[Rees products]
{Rees products and lexicographic shellability}

\author[Linusson] {Svante Linusson$^1$}
\address{Department of Mathematics, KTH-Royal Institute of Technology, SE-100 44, Stockholm, Sweden}
\thanks{$^1$ Linusson is a Royal Swedish Academy of Sciences Research Fellow supported by a grant from the Knut and Alice Wallenberg Foundation.}
\email{linusson@math.kth.se}

\author[Shareshian]{John Shareshian$^2$}
\address{Department of Mathematics, Washington University, St. Louis, MO 63130}
\thanks{$^{2}$Supported in part by NSF Grants
DMS 0604233 and 0902142}
\email{shareshi@math.wustl.edu}

\author[Wachs]{Michelle L. Wachs$^3$}
\address{Department of Mathematics, University of Miami, Coral Gables, FL 33124}
\email{wachs@math.miami.edu}
\thanks{$^{3}$Supported in part by NSF Grants
DMS 0604562 and 0902323}

\subjclass[2000]{05A30, 05E05, 05E45}

\date{January 30, 2012}

\dedicatory{Dedicated to Adriano Garsia}

\maketitle

\begin{abstract} We use the theory of lexicographic shellability to provide various
examples in which the rank of the homology of a Rees product of two
partially ordered sets enumerates some set of combinatorial objects,
perhaps according to some natural statistic on the set.  Many of these
examples generalize a result of J. Jonsson, which says that the rank of
the unique nontrivial homology group of the Rees product of a truncated
Boolean algebra of degree $n$ and a chain of length $n-1$ is the number of
derangements in $\S_n$.\end{abstract}

\vbox{
\tableofcontents
}

\section{Introduction} \label{secintro}
Rees products of posets were defined and studied by A. Bj\"orner and V. Welker in \cite{bw}.  While the main results in \cite{bw} provide combinatorial analogues of constructions in commutative algebra, it has turned out that Rees products of certain posets are connected with permutation enumeration and permutation statistics.  The first indication of this connection is provided by a conjecture in \cite{bw}, which says that the reduced Euler characteristic of the order complex of the Rees product of the truncated Boolean algebra $B_n \setminus \{\emptyset\}$ and a chain of length $n-1$ is the number of derangements in the symmetric group $\mathfrak S_n$.  This conjecture was proved by J. Jonsson in \cite{jo}.   

As we shall describe below, generalizations of Jonsson's result, along with similar results have been proved.  Our purposes in this paper are 
\begin{enumerate}
\item to give additional examples of Rees products whose order complexes have reduced Euler characteristics that enumerate certain classes of combinatorial objects, possibly according to some natural statistic,  and 
\item to show how the theory of lexicographic shellability applies to certain Rees products, in particular relating the homology of the order complex of the Rees product of a lexicographically shellable poset $P$ with a poset whose Hasse diagram is a rooted $t$-ary tree to the homology of the order complexes of some rank-selected subposets of $P$.
\end{enumerate}
These two purposes are in fact intertwined.  We prove all of our results on reduced Euler characteristics of order complexes of Rees products using lexicographic shellings.

All posets studied in this paper are finite.  We call a poset $P$ {\it semipure} if for each $x \in P$, the lower order ideal $P_{\leq x}:=\{y \in P:y \leq x\}$ is pure, that is, any two maximal chains in $P_{\leq x}$ have the same length.  The {\it rank} $r_P(x)$ of such an element $x$ is the length of a maximal chain in $P_{\leq x}$.  Given semipure posets $P,Q$ with respective rank functions $r_P,r_Q$, the {\it Rees product} $P \ast Q$ is the poset whose underlying set is
\[
\{(p,q) \in P \times Q:r_P(p) \geq r_Q(q)\},
\]
with order relation given by $(p_1,q_1) \leq (p_2,q_2)$ if and only if all of the conditions
\begin{itemize}
\item $p_1 \leq_P p_2$,
\item $q_1 \leq_Q q_2$, and
\item $r_P(p_1)-r_P(p_2) \geq r_Q(q_1)-r_Q(q_2)$
\end{itemize}
hold.  In other words, $(p_2,q_2)$ covers $(p_1,q_1)$ in $P \ast Q$ if and only if
\begin{enumerate}
\item $p_2$ covers $p_1$ in $P$, and
\item either $q_2=q_1$ or $q_2$ covers $q_1$ in $Q$.
\end{enumerate}

In Figure 1,  the Rees product of the truncated Boolean algebra $B_3 \setminus \{\emptyset\}$ and the chain $C_{2}:=\{0<1<2\}$ is given. The element $(S,j)$ is written as $S^j$ with the set brackets and commas omitted.

\vspace{.2in}\begin{center}\includegraphics[width=3.5in]{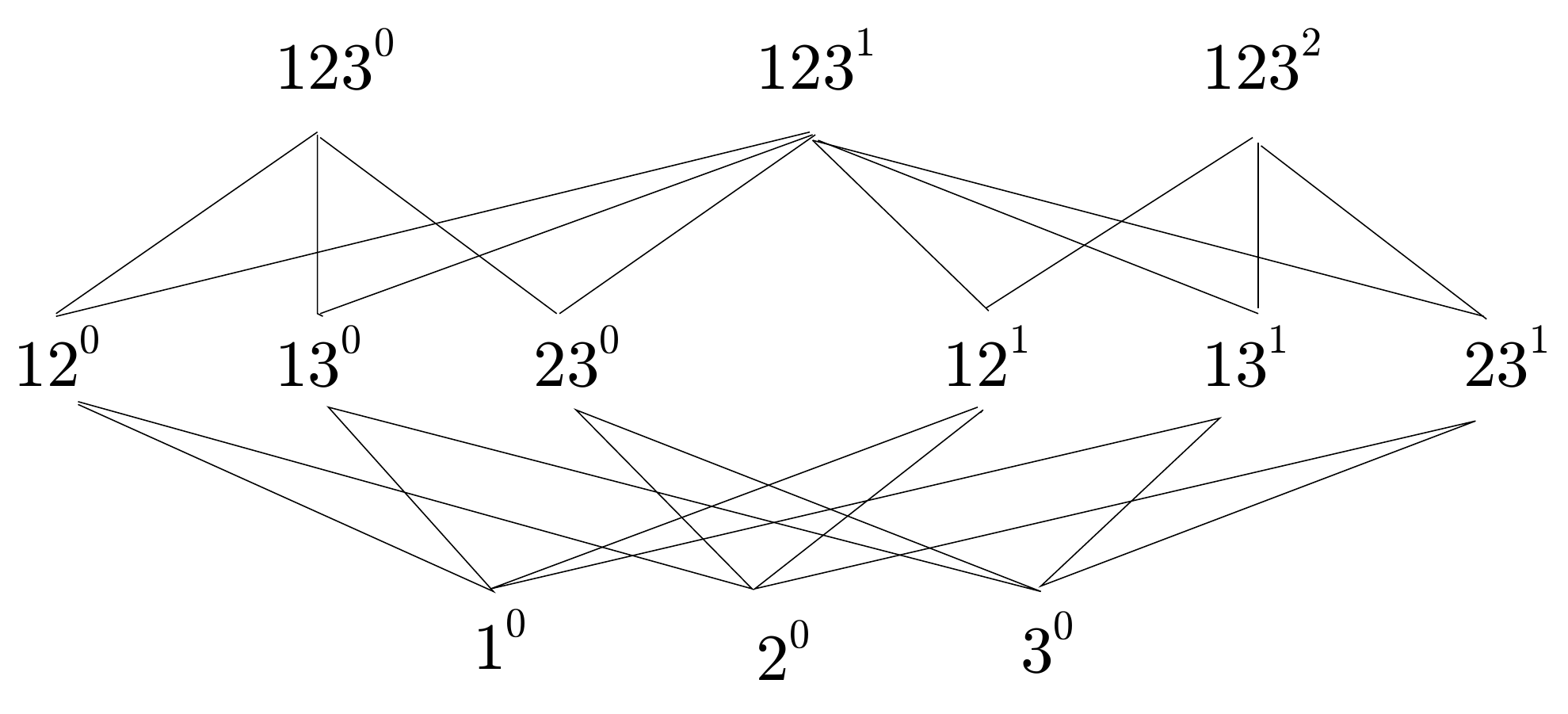}\end{center}
\centerline{{\bf Figure 1.}  $(B_3\setminus\{\emptyset\} ) * C_2$}
\vspace{.2in}

For any poset $P$, the {\it order complex} $\Delta P$ is the abstract simplicial complex whose $k$-dimensional faces are chains (totally ordered subsets) of length $k$ from $P$.  A simplicial complex $\Delta$ is {\it Cohen-Macaulay} if for each face $F \in \Delta$ (including the empty face), the reduced (integral, simplicial) homology of the link $lk_\Delta(F)$ is trivial in all dimensions except possibly $\dim(lk_\Delta(F))$.   Every Cohen-Macaulay complex is pure, that is, all maximal faces of a Cohen-Macaulay complex have the same dimension.   A poset is said to   be Cohen-Macaulay if its order complex is Cohen-Macaulay.   We will say that a poset has a particular topological property if its order complex has that property. 
The (reduced) homology of $P$ is given by $\rh_k(P):= \rh_k(\Delta P;\Z)$.    For further information on Cohen-Macaulay posets, see the surveys given  in  \cite{BGS}, \cite{st1}, \cite{w1}.

   Bj\"orner and Welker \cite[Corollary 2]{bw} prove that  the Rees product of any Cohen-Macaulay poset with any  acyclic Cohen-Macaualy poset   is Cohen-Macaulay.  It is known that both $B_n^-:=B_n \setminus \{\emptyset\}$ and the chain $C_n$ of length $n$ are Cohen-Macaulay, and $C_n$ is acyclic.  Thus the result of Jonsson mentioned above says that, with $d_n$ denoting the number of derangements in $\mathfrak S_n$,
\begin{equation} \label{jon}
\rk\widetilde{H}_{n-1}(B_n^- \ast C_{n-1})=d_n.
\end{equation}

Generalizations of (\ref{jon}) appear in the paper \cite{sw2} of Shareshian and Wachs.  For a poset $P$ with unique minimum element $\hz$, $P^-$ will denote $P \setminus \{\hz\}$.  For a prime power $q>1$ and a positive integer $n$, the poset of all subspaces of an $n$-dimensional vector space over the $q$-element field $\ff_q$ will be denoted by $B_n(q)$.  Also, $\dd_n$ will denote the set of all derangements in $\mathfrak S_n$.  It is shown in \cite{sw2} that
\begin{equation} \label{qan}
\rk\rh_{n-1}(B_n(q)^- \ast C_{n-1})=\sum_{\sigma \in \dd_n}q^{{{n} \choose {2}}-\maj(\sigma)+\exc(\sigma)},
\end{equation}
where $\maj$ and $\exc$ are, respectively, the major index and the excedance number, introduced by MacMahon in \cite[Vol. I, pp. 135,186; Vol. 2, p. viii]{mac1}, \cite{mac2} in the early part of the 20th century and extensively studied thereafter.

A generalization of (\ref{qan}) appears in \cite{sw2}.  
For positive integers $t,n$, let $T_{t,n}$ be the poset whose Hasse diagram is a complete $t$-ary tree of height $n$ with root at the bottom.   To put it more formally, $T_{t,n}$ consists of all sequences of elements of $[t]:=\{1,\ldots,t\}$ that have length at most $n$, including the empty sequence.  Given two such sequences ${\sf a}=(a_1,\ldots,a_k)$ and and ${\sf b} =(b_1,\ldots,b_l)$, we declare that ${\sf a} \leq {\sf b}$ if $k \leq l$ and $a_i=b_i$ for al $i \in [k]$.
Note that $T_{1,n}=C_n$.

It is shown in \cite{sw2} that if $P$ is Cohen-Macaulay of length $n$ then so is $P \ast T_{t,n}$.  Equation (\ref{gtreeintro2}) below is proved in \cite{sw2}, and equation (\ref{gtreeintro1}) follows quickly from (\ref{gtreeintro2}) and \cite[Corollary 2.4]{sw2}.  We have
\begin{equation}\label{gtreeintro1} \rk \tilde H_{n-1}(B_n(q)^- \ast T_{t,n-1})= t\sum_{\sigma \in \mathcal D_n} q^{\binom n 2 - \maj(\s) + \exc(\s)}t^{\exc(\sigma)}\end{equation} and
\begin{equation} \label{gtreeintro2} \rk \tilde
H_{n-1}((B_n(q) \ast T_{t,n})^-)= t \sum_{\sigma \in \sg_n} q^{\binom n 2 - \maj(\s) + \exc(\s)} t^{\exc(\sigma)}.\end{equation}

One can also find in \cite{sw2} type BC analogues of the results mentioned above, where $B_n$ and $B_n(q)$ are replaced, respectively, by the poset of faces of the $n$-crosspolytope and the poset of totally isotropic subspaces of a $2n$-dimensional vector space over $\ff_q$ equipped with a nondegenerate alternating bilinear form, and $\dd_n$ is replaced by the set of elements of the Weyl group of type BC that act as derangements on the set of vertices of the crossplytope.  In \cite{mr}, P. Muldoon and M. Readdy prove  an analog of  (\ref{jon}) that involves the poset of faces of the $n$-cube.  

As was mentioned above, the results of Bj\"orner and Welker \cite{bw} are concerned with  Cohen-Macaulayness of Rees products.  It turns out that analogous results for lexicographic shellability can be obtained and  utilized to obtain enumerative results.      Definitions and basic facts about lexicographic shellability are given in Section \ref{lexshellsec}.

Let $P$ be a pure poset of length $n$. For $S \subseteq [0,n]:= \{0,1,\dots,n\}$, the {\em rank-selected subposet} $P_S$ is the subposet of $P$ consisting of all $x \in X$ satisfying $r_P(x) \in S$.  If $P$ is  lexicographically shellable then $P$ is Cohen-Macaulay, as is every rank-selected subposet of $P$ (cf. \cite{bj}).  Thus, for all $S \subseteq [0,n]$, the homology of $P_S$ is determined by the Betti number
\[
\beta(P_S):=\rk \tilde H_{|S|-1}( P_S).
\]

Let $\hz_T$ be the minimum element of $T_{t,n}$.  Note that  if $P$ has a unique minimum element $\hat 0_P$ then  the poset $P \ast T_{t,n}$ has a minimum element $(\hz_P,\hz_T)$, but no maximum element.  Write $(P \ast T_{t,n})^+$ for the poset $P \ast T_{t,n}$ with a maximum element appended.  In Section \ref{lexshellsec} we show that if $P$ is lexicographically shellable then so is $(P \ast T_{t,n})^+$ for all $t$.  (In fact, we prove a stronger result, see Theorem \ref{ELth}.)

We call $S \subseteq \nn$ {\em stable} if there is no $i \in \nn$ such that $\{i,i+1\}  \subseteq S$.  For $X \subseteq \nn$, we write $\psx(X)$ for the set of all stable $S \subseteq X$. We use the lexicographic shellings described in Section \ref{lexshellsec} to prove in Section \ref{ascfreesec} that, for pure,  lexicographically shellable $P$ of length $n$, 
\begin{equation} \label{introbettieq}
\beta((P \ast T_{t,n})^-)=\sum_{S \in \psx([n-2])}\beta(P_{[n-1] \setminus S})t^{|S|+1}(1+t)^{n-2|S|-1}.
\end{equation}
In fact, we prove in Section \ref{ascfreesec} several formulae similar to (\ref{introbettieq}) involving either $P^-\ast T_{t,n}$ or $(P \ast T_{t,n})^-$.   (More general versions of these formulae in which the only requirement on P is that it be pure will appear in a forthcoming paper.)

In Sections \ref{chainsec}, \ref{qbnsec}, \ref{psec} and \ref{noncrosssec} we apply our results from Sections \ref{lexshellsec} and \ref{ascfreesec} to obtain enumerative results.

The Boolean algebra $B_n$ is the direct product of $n$ copies of the chain $C_1$.  In Section \ref{chainsec} we prove  generalizations of  the $q=1$ cases of (\ref{gtreeintro1}) and (\ref{gtreeintro2}) in which we replace $B_n$ with an arbitrary product of finite chains. 
Let  $\mu=(\mu_1,\ldots,\mu_k)$ be a weak composition of $n$ into $k$ parts, that is a $k$-tuple of nonnegative integers whose sum is $n$.  The product poset $B_\mu:=\prod_{i=1}^{k}C_{\mu_i}$ is  pure of length $n$.  It is well known that $B_\mu$ is lexicographically shellable.

Let $M(\mu)$ be the multiset in which each $i \in [k]$ appears with multiplicity $\mu_i$.  A {\em multiset permutation} of $M(\mu)$ is a $2 \times n$ array $(a_{ij})$ such that 
\begin{itemize}
\item the multisets $\{a_{1j}:j \in [n]\}$ and $\{a_{2j}:j \in [n]\}$ are both equal to $M(\mu)$,
\item $a_{1,j} \le a_{1,j+1}$ for all $j \in [n-1]$.
\end{itemize}
Let $w=(a_{ij})$ be a multiset permutation of $M(\mu)$.  We say $w$ is  {\em multiset derangement} of $M(\mu)$ if
\begin{itemize}
\item $a_{1j} \neq a_{2j}$, for all $j \in [n]$.
\end{itemize}
We say $w$ is a {\em Smirnov word} on $M(\mu)$ if 
\begin{itemize}
\item $a_{2j} \neq a_{2,j+1}$ for all $j \in [n-1]$.
\end{itemize}
An {\em excedance} of $w$ is any $j \in [n-1]$ such that $a_{2j}>a_{1j}$.  A {\em descent} of $w$ is any $j \in [n-1]$ such that $a_{2j}>a_{2,j+1}$.  We write $\Exc(w)$ for the set of excedances of $w$, $\Des(w)$ for the set of descents of $w$, and $\des(w)$ and $\exc(w)$, respectively,  for $|\Des(w)|$ and $|\Exc(w)|$.   

Our main results in Section \ref{chainsec} say that if ${\mathcal M}{\mathcal D}_{M(\mu)}$ and ${\mathcal S}{\mathcal W}_{M(\mu)}$ are, respectively, the sets of  multiset derangements and Smirnov words on $M(\mu)$ then, for all $t \in \pp$,
\begin{equation} \label{mddth}
\beta(B_\mu^- \ast T_{t,n-1})=\sum_{w \in {\mathcal M}{\mathcal D}_{M(\mu)}}t^{1+\exc(w)},
\end{equation}
and
\begin{equation} \label{swth}
\beta((B_\mu \ast T_{t,n})^-)=\sum_{w \in {\mathcal S}{\mathcal W}_{M(\mu)}}t^{1+\des(w)}.
\end{equation}
When  $M(\mu) $ is the set $[n]$, equation (\ref{mddth}) is the $q=1$ case of  (\ref{gtreeintro1}).  Since $\des$ and $\exc$ are equidistributed on the symmetric group $\sg_n$,  equation (\ref{swth}) is the $q=1$ case of (\ref{gtreeintro2}).

In Section \ref{qbnsec} we revisit the Rees products $B_n(q) \ast T_{t,n}$ that were studied in \cite{sw2}.  Comparing (\ref{gtreeintro2}) with a formula for $\rk \tilde{H}_{n-1}((B_n(q) \ast T_{t,n})^-)$ obtained using the techniques developed herein, we exhibit a permutation statistic called ${\aid}$ such that the pair $(\aid,\des)$ is equidistributed on $\mathfrak S_n$ with the pair $(\maj,\exc)$.

In Section \ref{psec} we aim for $p$-analogues of the results in Section \ref{chainsec}. Given a weak composition  $\mu=(\mu_1,\ldots,\mu_k)$ of $n$, a natural choice for a $p$-analogue to the poset $B_\mu$ is the lattice $B_\mu(p)$ of subgroups of the abelian $p$-group $\bigoplus_{j=1}^{k}\zz/p^{\mu_j}\zz$.  We examine $B_\mu(p)^-\ast T_{t,n-1}$ and $(B_\mu(p) \ast T_{t,n})^-$.  Here our results are less than optimal.  We show that there exist statistics $s_1,s_2:\pp^n \rightarrow \nn$ such that 
\begin{equation} \label{bmup1}
\beta((B_\mu(p)^-\ast T_{t,n-1})=\sum_{w \in {\mathcal M}{\mathcal D}_{M(\mu)}}p^{s_1(w)}t^{1+\exc(w)},
\end{equation}
and
\begin{equation} \label{bmup2}
\beta((B_\mu(p) \ast T_{t,n})^-)=\sum_{w \in {\mathcal S}{\mathcal W}_{M(\mu)}}p^{s_2(w)}t^{1+\des(w)}.
\end{equation}
However, we lack natural combinatorial interpretations for $s_1$ and $s_2$.

In Section~\ref{noncrosssec} we consider the lattice $\NC_n$ of nonncrossing partitions of $[n]$, which is known to be  lexicographically shellable.  We show that
\begin{equation} \label{nc1}
\beta((\NC_{n+1} \ast T_{t,n})^-)=\frac{1}{n+1}\sum_{k=0}^{n-1}{{n-1} \choose {k}}\sum_{w \in [n+1]^{n-k}}t^{\des(w)+k},
\end{equation}
and
\begin{equation} \label{nc2}\beta(\NC_{n+1} ^- * T_{t,n-1}) =\end{equation} $$  (-1)^{n}+ \frac {1}{n+1}\sum_{r=0}^{n-1}(-1)^r \binom {n+1} r \sum_{k=0}^{n-1-r} {n-1-r \choose k} \sum_{w \in [n+1]^{n-k-r}} t^{\des(w)+k} . $$
Equation (\ref{nc1}) reduces to a particularly nice enumerative formula when $t$ is set equal to $1$, namely
$$\beta((\NC_{n+1} \ast C_n)^-) = (n+2)^{n-1}.$$

Proofs of the various identities stated above involve symmetric function formulae for generating functions for  words with no double descent, words with no double ascent, Smirnov words, and multiset derangements, keeping track of  descents, ascents, descents and excedances, respectively.  The formula involving  Smirnov words follows from work in \cite{sw2}, while the remaining formulae are due to Ira Gessel.  We give all of these formulae in Section \ref{symmsec}.

\part{Lexicographical Shellability}

 \section{Edge labelings of Rees products} \label{lexshellsec}
 
After reviewing some basic facts from the theory of lexicographic
shellability  (cf. \cite{bj, bjwa1, bjwa2, bjwa3, w1}), we will present our main results on lexicographic shellability of Rees products.  Let $P$ be a  {\em bounded} poset, i.e., a poset with a unique minimum element and a unique maximum element, and  let $\Cov(P)$
be the set of pairs $(x,y) \in P \times P$ such that $y$ covers
$x$ in $P$.  Let $L$ be another poset and let $W$ be the set of
all finite sequences of elements of $L$.  The given partial ordering of
$L$ induces a lexicographic ordering $\preceq$ on $W$, which is also a partial order.
 An {\it edge
labeling} of $P$ by $L$ is a function $\lambda:\Cov(P) \rightarrow
L$.  Given such a function $\lambda$ and a saturated chain
$C=\{x_1<\ldots<x_m\}$ from $P$, we write $\lambda(C)$ for
$(\lambda(x_1,x_2),\ldots,\lambda(x_{m-1},x_m)) \in W$.  An {\it
ascent} in $C$ is any $i \in [m-1]$ satisfying
$\lambda(x_i,x_{i+1})\le\lambda(x_{i+1},x_{i+2})$. We say
$\lambda$ is {\it weakly increasing} on $C$ if each  $i \in [m-1]$ is an
ascent in $C$.  The edge labeling $\lambda$ is an {\it
EL-labeling} of $P$ if whenever $x<y$ in $P$ there is a unique
maximal chain $C$ in the interval $[x,y]$ on which $\lambda$ is
weakly increasing and for all other maximal chains $D$ in $[x,y]$ we have
$\lambda(C) \prec \lambda(D)$.  A bounded poset that admits an EL-labeling is said to be
EL-shellable.

The notion of EL-shellability for {\em pure}  posets was introduced by Bj\"orner in \cite{bj}.  A more general concept called CL-shellability,  introduced  by
Bj\"orner and Wachs  in \cite{bjwa1}, also associates label sequences with maximal chains of a poset. We will not define CL-labelings here.  Both notions were subsequently extended to all  bounded posets by Bj\"orner and Wachs in \cite{bjwa3}.     All of our results in this section and the next section hold  for CL-labelings as well as EL-labelings.  For the sake of simplicity we state and prove them only for EL-labelings.  The proofs for CL-labelings are virtually the same as those for EL-labelings.

Given an EL-labeling $\lambda$  on
$P$, we call a maximal chain $C$ from $P$ {\it ascent free} if its label sequence
contains no ascent.  The {\em descent set} of a maximal chain $x_0 < x_1 < \dots < x_n$ is defined to be the set $\{i \in [n-1] : \lambda(x_{i-1},x_i) \not\le \lambda(x_{i},x_{i+1}) \}$.    Thus a maximal chain is ascent free if and only if its  descent set is $[n-1]$. 

One of the main results in the theory of
lexicographic shellability  is the following result.

\begin{thm}[Bj\"orner and Wachs \cite{bjwa3}]
Let $\lambda$ be an EL-labeling  of a bounded   poset $P$ with minimum $\hat 0$ and maximum $\hat 1$.  
Then $P\setminus\{\hat 0,\hat 1\}$ is homotopy equivalent to a wedge of  spheres, where for each $k \in \N$ the number of spheres  of dimension $(k-2)$ is the number of ascent free maximal chains of length $k$.
\label{elth}
\end{thm}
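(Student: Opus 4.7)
The plan is to produce a (non-pure) shelling of $\Delta(P \setminus \{\hat 0,\hat 1\})$ from $\lambda$ and invoke the general fact that a shellable complex is a wedge of spheres counted by facets with full restriction. Order the maximal chains of $P$ lexicographically by their label sequences $\lambda(m)$, and consider the induced ordering $F_1, F_2, \ldots$ of the facets of $\Delta(P \setminus \{\hat 0,\hat 1\})$ obtained by deleting the endpoints. I would verify that this is a shelling in the sense of Bj\"orner--Wachs for non-pure complexes, namely that for each $i \ge 2$ the intersection $\overline{F_i} \cap \bigcup_{j<i}\overline{F_j}$ is pure of codimension one in $F_i$, and simultaneously identify the corresponding restriction face $\mathcal{R}(F_i) \subseteq F_i$.

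The heart of the argument is computing $\mathcal{R}(F_i)$ in terms of the descent set. Write the chain indexing $F_i$ as $\hat 0 = x_0 \lessdot x_1 \lessdot \cdots \lessdot x_k = \hat 1$, so that $F_i = \{x_1,\ldots,x_{k-1}\}$. The claim is that $F_i \setminus \{x_j\}$ is contained in some earlier facet if and only if $j$ lies in the descent set $D(m)$. For the forward direction, if $\lambda(x_{j-1},x_j) \not\le \lambda(x_j,x_{j+1})$, then the chain through $x_j$ is not the unique weakly-increasing maximal chain of $[x_{j-1},x_{j+1}]$ guaranteed by the EL-property; replacing $x_j$ by the atom $y$ of that unique lex-minimum chain and splicing into $m$ produces a lex-earlier maximal chain passing through $F_i \setminus \{x_j\}$. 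For the converse, the uniqueness clause in the EL-definition applied to $[x_{j-1},x_{j+1}]$ ensures that when $j$ is an ascent, any substitution $y \ne x_j$ strictly increases the label sequence on this interval and hence lex-order of the entire chain, so no earlier facet contains $F_i \setminus \{x_j\}$.

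Combining these, $\mathcal{R}(F_i) = \{x_j : j \in D(m)\}$, a face of dimension $|D(m)|-1$, and the intersection $\overline{F_i} \cap \bigcup_{j<i}\overline{F_j}$ is exactly the subcomplex generated by $\{F_i \setminus \{x_j\} : j \in D(m)\}$, which is pure of codimension one as required. Invoking the general homotopy theorem for non-pure shellings (Bj\"orner--Wachs), $\Delta(P \setminus \{\hat 0, \hat 1\})$ is homotopy equivalent to a wedge in which each facet $F_i$ with $\mathcal{R}(F_i) = F_i$ contributes one sphere of dimension $\dim F_i$. Since $\mathcal{R}(F_i) = F_i$ iff $D(m) = [k-1]$ iff $m$ is ascent-free, and a length-$k$ maximal chain yields a facet of dimension $k-2$, the theorem follows. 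The main technical obstacle is the careful bookkeeping in the shelling verification, particularly confirming that the splicing operation produces a strictly lex-earlier \emph{full} chain (not merely one whose label sequence is smaller on the modified interval) and that the subcomplex described above is exactly the intersection; both rest on the uniqueness clause of the EL-labeling and its propagation from sub-intervals to the entire chain.
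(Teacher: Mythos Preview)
The paper does not give a proof of this theorem; it is stated as a result of Bj\"orner and Wachs and cited to \cite{bjwa3}. Your proposal is exactly the standard argument from that reference: order the maximal chains lexicographically by label sequence to obtain a (non-pure) shelling of $\Delta(P\setminus\{\hat 0,\hat 1\})$, identify the restriction face of each facet with the descent set of the corresponding chain, and then invoke the general homotopy-type theorem for shellable complexes. So your approach is correct and coincides with the original source.

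One small point of care in the non-pure setting: when $j$ is a descent and you ``replace $x_j$ by the atom $y$ of the increasing chain,'' the interval $[x_{j-1},x_{j+1}]$ may admit increasing chains of length greater than $2$, so the spliced chain should be obtained by replacing the entire segment $x_{j-1}\lessdot x_j\lessdot x_{j+1}$ with the full increasing maximal chain of $[x_{j-1},x_{j+1}]$, not just a single element. The resulting facet still contains $F_i\setminus\{x_j\}$ and is strictly lex-earlier, which is all that is needed; your concluding paragraph already flags this bookkeeping, so this is a clarification rather than a gap.
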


We will also need the following basic result.   Given a pure poset $P$ of length $n$ and a set $S \subseteq [0,n]$, recall that 
the rank selected subposet is defined by $$P_S := \{x \in P : r_P(x) \in S\}.$$

\begin{thm}[Bj\"orner \cite{bj}] \label{rankselth} Let $\lambda$ be an EL-labeling  of a bounded    pure poset $P$ of length $n$.    For $S \subseteq [n-1]$, let $c(S)$ be the number of
maximal chains in $P$ having descent set $S$ with respect to $\lambda$. 
Then $P_S $ has the homotopy type of a wedge of $c(S)$ spheres of
dimension $|S|-1$.
\end{thm}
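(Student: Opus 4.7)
The plan is to deduce this from Theorem~\ref{elth} by lifting $\lambda$ to a shelling of the rank-selected complex $\Delta P_S$ and identifying its homology facets. Set $\hat P_S := P_S \cup \{\hat 0, \hat 1\}$ and write $S = \{s_1 < \cdots < s_k\}$ with $k = |S|$. For any maximal chain $\hat m = (\hat 0 = y_0 < y_1 < \cdots < y_k < y_{k+1} = \hat 1)$ of $\hat P_S$ (so $r_P(y_j) = s_j$ for $1 \le j \le k$), define its \emph{canonical refinement} $\bar m$ in $P$ by concatenating, for each cover $(y_{i-1}, y_i)$ of $\hat m$, the unique $\lambda$-weakly-increasing maximal chain $C_{y_{i-1},y_i}$ in $[y_{i-1}, y_i]_P$ provided by the EL property. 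Then $\lambda(\bar m)$ is weakly increasing within each block, so $\Des(\bar m) \subseteq S$, and $\hat m \mapsto \bar m$ is a bijection from maximal chains of $\hat P_S$ onto maximal chains $m$ of $P$ with $\Des(m) \subseteq S$, with inverse given by rank-$S$ restriction (a max chain $m$ of $P$ with $\Des(m)\subseteq S$ has weakly increasing $\lambda$-labels in each $S$-block, so those blocks coincide with the respective $C$'s).

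Next I would order the maximal chains of $\hat P_S$ lexicographically by $\lambda(\bar m)$ and verify that this yields a shelling of $\Delta P_S$ whose restriction of the facet $\hat m$ is $\{y_j : s_j \in \Des(\bar m)\}$. The key calculation is the following: an atom swap $y_j \rightsquigarrow y_j'$ of $\hat m$ at rank $s_j$ produces a lex-smaller refinement $\bar m'$ if and only if the unique $\lambda$-weakly-increasing maximal chain of the interval $[y_{j-1}, y_{j+1}]_P$ fails to pass through $y_j$, which by the EL-uniqueness of such chains is equivalent to the concatenation of $C_{y_{j-1},y_j}$ with $C_{y_j,y_{j+1}}$ having a $\lambda$-descent at the interior rank $s_j$, i.e., to $s_j \in \Des(\bar m)$.

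Hence a facet $\hat m$ is a homology facet of this shelling iff $\Des(\bar m) \supseteq S$; combined with $\Des(\bar m) \subseteq S$, this forces $\Des(\bar m) = S$. Under the canonical-refinement bijection these homology facets correspond to the $c(S)$ maximal chains of $P$ with descent set exactly $S$, and since a shelling of a pure $(|S|-1)$-dimensional simplicial complex exhibits it as homotopy equivalent to a wedge of $(|S|-1)$-spheres indexed by homology facets, the theorem follows. The main obstacle is the careful verification of the shelling property together with the restriction map: both rest on the EL-uniqueness of $\lambda$-weakly-increasing chains and on the fact that the length-$(s_{j+1}-s_{j-1})$ interval analysis above cleanly characterizes when an atom swap in $\hat P_S$ produces a lex-earlier refinement.
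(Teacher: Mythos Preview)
The paper does not prove Theorem~\ref{rankselth}; it is quoted from Bj\"orner \cite{bj} without argument, so there is no in-paper proof to compare against. Your outline is essentially the original Bj\"orner argument: extend each maximal chain of $\hat P_S$ to its canonical refinement in $P$, order by the lex order of the refined label sequences, and identify the restriction of a facet $\hat m$ as $\{y_j:s_j\in\Des(\bar m)\}$.

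Your atom-swap analysis is correct and handles one direction of the shelling verification: if $s_j\in\Des(\bar m)$ you can replace $y_j$ by the rank-$s_j$ element on the unique increasing chain of $[y_{j-1},y_{j+1}]$ to obtain a lex-earlier facet, and conversely if $s_j\notin\Des(\bar m)$ any swap at level $s_j$ is lex-later. What you flag as ``the main obstacle'' really does require one more step. You must show that if $\hat m'$ is \emph{any} lex-earlier facet, then some $y_j$ with $s_j\in\Des(\bar m)$ satisfies $y_j\notin\hat m'$. The clean way to close this is to invoke the EL-shelling of $P$ itself: the restriction of $\bar m$ in that shelling is exactly $\{y_j:s_j\in\Des(\bar m)\}$ (since the only descents of $\bar m$ occur at ranks in $S$), and by the shelling property of $\Delta P$ this set cannot be contained in the earlier facet $\bar m'$. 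Hence some such $y_j$ is missing from $\bar m'$, and a fortiori from $\hat m'$. With this observation added, your argument is complete and matches the classical proof. (One small caveat: distinct refinements can share a label sequence, so ``lex order by $\lambda(\bar m)$'' should be read as any linear extension thereof; this does not affect the argument.)
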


 Given a poset $P$, by $\hat P$ we mean the poset $P$ with a new minimum element $\hat 0$ and a new maximum element $\hat 1$ attached even if $P$  already has such elements.   Given a poset $P$ with a minimum element $\hat 0$, we say that an edge labeling $\lambda:\Cov(P) \rightarrow L$ is a semi-EL-labeling if $[\hat 0, m]$  is an EL-labeling for each maximal element $m$ of $P$.  Note if $P$ is bounded then $\lambda$ is a semi-EL-labeling if and only if it is an EL-labeling.  
 Recall that we defined $T_{t,n}$ to be  the poset whose Hasse diagram is the complete $t$-ary tree of height $n$ with the root at the bottom.  The edge labeling in which all the edges in $\Cov(T_{t,n}) $ are  labeled with $1$ is clearly a semi-EL-labeling of $P$.

 \begin{thm} \label{ELth}
Let $P_1$ and $P_2$ be   semipure posets of the same length.  Assume also that $P_2$ has a minimum element  $\hat 0_2$.  
Let $\lambda_1:\Cov(\hat P_1) \rightarrow L_1$ be an EL-labeling  of $\hat P_1$ and let $\lambda_2:\Cov(P_2) \rightarrow L_2$ be a semi-EL-labeling of $P_2$.   Let $\hat 0_1$ denote the minimum element of $\hat P_1$ and let $\hat 1_1$ denote the maximum element.  Let   $(\hat 0_1, \hat 0_2)$ denote the minimum element of 
$\wh{P_1*P_2}$ and let $\hat 1$ denote the maximum element.  Define the edge labeling $$\lambda:\Cov(\wh{P_1*P_2})
\rightarrow  L_1  \times ( L_2 \uplus \{\hat 0_{L_2}\}) $$ by
\[
\lambda((x,k),(y,l))=\begin{cases} (\lambda_1(x,y),\lambda_2(k,l)) &\mbox{if } k<_{P_2} l\\ 
(\lambda_1(x,y),\hat 0_{L_2}) &\mbox{if }k=l\ \end{cases}
\]
for $(y,l) < \hat 1$, and
\[ \lambda((x,k), \hat 1) = \ (\lambda_1(x,\hat 1_1), \hat 0_{L_2}).\]
Then $\lambda$ is an EL-labeling of $\wh{P_1*P_2}$.
\label{elrees}
\end{thm}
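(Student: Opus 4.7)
The plan is to show, for every interval $[u,v]$ of $\wh{P_1 \ast P_2}$, that $\lambda$ admits a unique weakly increasing maximal chain $C_0$, and that $\lambda(C_0)$ is strictly lex-smaller than the label sequence of any other maximal chain in $[u,v]$. I will work with the product order on $L_1 \times (L_2 \uplus \{\hat 0_{L_2}\})$, where $\hat 0_{L_2}$ is placed strictly below every element of $L_2$. The key structural observation is that every maximal chain in an interior interval $[(x_1,k_1),(x_2,k_2)]$ of $P_1\ast P_2$ is encoded by a triple $(D_1,D_2,I)$, where $D_1$ is a maximal chain from $x_1$ to $x_2$ in $P_1$ of length $m$, $D_2$ is a maximal chain from $k_1$ to $k_2$ in $P_2$ of length $d$, and $I \subseteq [m]$ with $|I|=d$ records the positions at which the chain performs a $P_2$-move.

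To identify $C_0$, I would read off the two halves of the product-order weakly-increasing condition. Since $\hat 0_{L_2}$ is the unique minimum of the second-coordinate poset, no move step can be followed by a stay step in a weakly increasing chain, which forces $I = \{m-d+1,\ldots,m\}$. The $\lambda_1$-values along $D_1$ must be weakly increasing, and the EL-labeling $\lambda_1$ of $\hat P_1$ (restricted to $[x_1,x_2]$) supplies a unique such $D_1$. Similarly, the $\lambda_2$-values along $D_2$ must be weakly increasing, and the semi-EL property of $\lambda_2$ implies it restricts to an EL-labeling on every interval of $P_2$ (each such interval sits inside $[\hat 0_2, m']$ for some maximal $m' \in P_2$), yielding a unique $D_2$. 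A direct check that each consecutive pair of labels of $C_0$ satisfies the product-order inequality then confirms $C_0$ is weakly increasing.

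For lex-smallest, I would take any other maximal chain $C$ with triple $(D_1^C, D_2^C, I^C)$ and examine the first position $j$ at which $\lambda(C)$ disagrees with $\lambda(C_0)$. If the $P_1$-projections first differ at $j$, then EL of $\hat P_1$ gives $\lambda_1(C_0)_j < \lambda_1(C)_j$ strictly in $L_1$; a short counting argument using $I \cap [1,j-1] = I^C \cap [1,j-1]$ together with $I = \{m-d+1,\ldots,m\}$ ensures the second-coordinate types (stay versus move) at position $j$ either coincide or leave $C_0$ with $\hat 0_{L_2}$, so the product-order comparison delivers $\lambda(C_0)_j < \lambda(C)_j$. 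If instead the $P_1$-projections agree through $j$ but the second coordinates disagree, the same counting rules out the configuration ``$C_0$ moves at $j$ while $C$ stays at $j$,'' leaving only ``$C_0$ stays, $C$ moves'' (so $\hat 0_{L_2} < \lambda_2(\cdot)$) or ``both move with differing $\lambda_2$-values'' (so EL of $P_2$ makes $C_0$'s second coordinate strictly smaller). Intervals involving $\hat 0$ or $\hat 1$ are handled the same way once one notes that the cover labels attached to $\hat 0$ and $\hat 1$ always carry $\hat 0_{L_2}$ in the second coordinate and so behave as forced stays, which in particular propagates the stay condition backward from $\hat 1$.

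The main obstacle is the counting argument eliminating the bad configuration ``$C_0$ has a $\lambda_2$-move at $j$ while $C$ has a $\hat 0_{L_2}$-stay at $j$,'' since this is the one configuration in which the product-order comparison at $j$ would go against $C_0$. Its exclusion rests on the rigidity imposed by $I$ being the final $d$ positions of $[m]$: once $I$ and $I^C$ agree on $[1,j-1]$ and share cardinality $d$, having $j \in I \setminus I^C$ would force $|I^C \cap [j+1,m]| > m-j$, which is impossible.
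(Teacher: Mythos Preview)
Your outline is correct and follows the same strategy as the paper: construct the weakly increasing chain by taking the $\lambda_1$- and $\lambda_2$-increasing chains with all stays preceding all moves, then establish lex-minimality at the first differing position by case analysis (your cases split on whether the $\lambda_1$-components differ there, the paper's on whether that position lies in the stay or move region of $C_0$, but these amount to the same bookkeeping, and your pigeonhole count excluding ``$C_0$ moves, $C$ stays'' is precisely what the paper's move-region subcase implicitly relies on). One small point to tighten in your Case~1: when both chains move at position $j$, knowing the types coincide is not by itself enough for the product-order comparison---you also need $\lambda_2(C_0)_j \le \lambda_2(C)_j$ in $L_2$, which follows from the lex-minimality of $D_2$ together with the observation that the $\lambda_2$-label sequences of $D_2$ and $D_2^C$ agree through that step (since $I\cap[1,j-1]=I^C\cap[1,j-1]$ and the full labels agree on $[1,j-1]$).
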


\begin{proof}  {\bf Case 1: } $(x,k) < (y,l)<\hat 1$ in $\wh{P_1 * P_2}$.  Then  $x <y$ in $\hat P_1$ and $k \le l$ in $P_2$.  It follows that there is a unique maximal chain 
$\{x=u_0<\dots<u_m=y\}$ in  $[x,y]$ on
which $\lambda_1$ is weakly increasing and  a unique maximal chain $\{k=c_{0} <\dots< c_{r(l)-r(k)}=l\}$ in  $[k,l]$ on which $\lambda_2$ is weakly increasing.   Let 
$$e_i = \begin{cases} k &\mbox{ for } 0 \le i \le m+r(k) -r(l)\\ c_{i-m-r(k) +r(l)} &\mbox{for  }  m+r(k) -r(l) < i \le m .\end{cases}$$
The labeling $\lambda$ is weakly increasing on the maximal chain $$C:=\{(u_0,e_0)  <  (u_1,e_1)<\cdots < (u_{m},e_{m})\},$$ of the
interval $I:=[(x,k),(y,l)]$.

To establish uniqueness of the maximal chain with weakly increasing labels, suppose that  $\lambda$ is weakly increasing on
the maximal chain $D=\{(v_0,f_0)<\dots<(v_m,f_m)\}$ in the
interval $I$.  Then $\lambda_1$ is weakly increasing on the  chain $\{v_0 < \dots < v_m\}$, which implies that $v_i=u_i$ for all $0 \leq i \leq m$.
Moreover, if $\lambda(D)=((a_1,d_1),\dots,(a_m,d_m))$  then we must have $d_i=\hat 0_{L_2}$ for $1 \leq
i \leq m+r(k)-r(l)$ and $d_i \le d_{i+1} $  in $L_2$ for $m+r(k)-r(l)<i \leq m-1$ .  If $d_i = \hat 0_{L_2}$ then $f_{i-1} = f_i$, and  if  $d_i \in L_2$ then $f_{i-1} $ is covered by $ f_i$ in $P_2$ and $d_i
=\lambda_2(f_{i-1},f_i)$.  It follows that    if $j= m+r(k)-r(l)$ then 
$$k=f_0 = f_1 =\dots = f_j$$ and $$\{ f_j < f_{j+1} < \dots < f_m\}$$ is the unique maximal chain of the interval $[k,l] $ in $ P_2$ for which $\lambda_2$ is weakly increasing.  Thererfore   $f_i=e_i$ for all $i$.

Next we show  that the  maximal chain $C$ of $I$ has a label sequence that    lexicographically precedes the label sequences of all maximal chains of $I$.
Let $D=\{(v_0,f_0)<\dots<(v_m,f_m)\}$  be another maximal chain in
$I$.  Assume that $(u_i,e_i)=(v_i,f_i)$ for  $0 \leq i <t$ but
$(u_t,e_t) \neq (v_t,f_t)$.  We need to show that 
\begin{equation}\label{lexfirsteq}  \lambda((u_{t-1},e_{t-1}), (u_t,e_t) ) < \lambda((v_{t-1},f_{t-1}), (v_t,f_t) )\end{equation} in $ L_1 \times ( L_2 \uplus \{\hat 0_{L_2}\})$

First we handle the case in which
$1\le t \le m+r(k)-r(l)$.  In this case we have   $e_{t-1} =e_t = k$,  which implies
\begin{equation} \label{lameq} \lambda((u_{t-1},e_{t-1}), (u_t,e_t) ) = (\lambda_1(u_{t-1},u_t),\hat 0_{L_2}).\end{equation} If $u_t = v_t$ then $f_t \ne e_t$, which implies that  $f_t$ covers $f_{t-1}= k$ in $P_2$.  Since   $\lambda_2(f_{t-1},f_t) > \hat 0_{L_2} $ and $$\lambda((v_{t-1},f_{t-1}), (v_t,f_t) ) = (\lambda_1(v_{t-1},v_t),\lambda_2(f_{t-1},f_t) ),$$
 (\ref{lexfirsteq}) holds. Now assume  $u_t \ne v_t$.  We have \begin{equation} \label{lam1eq} \lambda_1(u_{t-1},u_t) <
\lambda_1(v_{t-1},v_t)\end{equation}
 in $L_1$.
Indeed, it is a basic property of EL-labelings that if $P$ is a poset with EL-labeling $\lambda$ then for each interval $[x,y]$, if $a$ covers $x$ in the unique  maximal chain of  $[x,y]$ with weakly  increasing labels and $b$ is an atom of $[x,y]$ other than $a$, then $\lambda(x,a) < \lambda(x,b)$ (cf. \cite[Proposition 2.5]{bj}, \cite[Lemma~5.3]{bjwa3}).  Since   $\lambda((v_{t-1},f_{t-1}), (v_t,f_t) ) = (\lambda_1(v_{t-1},v_t),d)$, for some $d \in  L_2 \uplus \hat 0_{L_2}$, the desired inequality  (\ref{lexfirsteq})  follows from (\ref{lameq}) and
 (\ref{lam1eq}).  
 
 Now assume $ m+r(k)-r(l)< t \le m$.  In this case we have 
 $u_t$ and $v_t$ cover $u_{t-1} = v_{t-1}$ in $\hat P_1$, and 
 $e_t$ and $f_t$ cover $e_{t-1}=f_{t-1}$ in $P_2$. It  now follows from the basic property of EL-labelings mentioned in the previous paragraph that either (\ref{lam1eq})   and $\lambda_2(e_{t-1},e_t) \le\lambda_2(f_{t-1},f_t) $ or  $ \lambda_1(u_{t-1},u_t) \le\lambda_1(v_{t-1},v_t)$ and
$\lambda_2(e_{t-1},e_t) < \lambda_2(f_{t-1},f_t) $ hold, which yields the desired conclusion (\ref{lexfirsteq}).

{\bf Case 2:} $(x,k)<\hat 1$ in $\wh{P_1 * P_2}$.  Then  $x \le \hat 1_1$ in $\hat P_1$ and there is a unique maximal chain 
$\{x=u_0<\dots<u_m< \hat 1_1\}$ in  $[x,\hat 1_1]$ on
which $\lambda_1$ is weakly increasing.      The labeling  $\lambda $ is weakly increasing on the maximal chain
$$C:=\{(u_0,k)  <\dots < (u_{m},k) < \hat 1\}$$ of the
interval $[(x,k),\hat 1]$.  To establish uniqueness of the maximal chain with weakly increasing labels, note that  the top label of every maximal chain of $[(x,k),\hat 1]$ is of the form $(\lambda_1(v,\hat 1_1), \hat 0_{L_2})$, where $v$ is a maximal element of  $P_1$ .   Hence  if   $D=\{(v_0,f_0)<\dots<(v_{m^\prime},f_{m^\prime})<\hat 1\}$ is a maximal chain of $[(x,k),\hat 1]$ with weakly increasing labels then  $$\lambda(D) = \{(a_1,\hat 0_{L_2}), \dots , (a_{m^\prime+1}, \hat 0_{L_2}))\}, $$ where $a_1 \le  \dots \le a_{m^\prime+1}$ in $L_1$.
 It follows that $f_i =k$ for all $i=1, \dots, m^\prime$ and $\{v_0 < \dots < v_{m^\prime} <\hat 1_1\}$ is the unique maximal chain of $[x, \hat 1_1]$ with weakly increasing labels.   Hence $m=m^\prime$ and $v_i = u_i$ for all $i=1, \dots, m$.
 
  Now
let $D=\{(v_0,f_0)<\dots<(v_{m^\prime},f_{m^\prime})<\hat 1\}$  be a maximal chain in
$[(x,k),\hat 1]$ that is different from $C$.  We show that the label sequence of $C$ is lexicographically less than that of $D$.  Assume that $(u_i,k)=(v_i,f_i)$ for  $1 \leq i <t$ but
$(u_t,k) \neq (v_t,f_t)$.  We need to show that 
\begin{equation}\label{lexfirsteq2}  \lambda((u_{t-1},k), (u_t,k) ) < \lambda((v_{t-1},f_{t-1}), (v_t,f_t) )\end{equation} in $ L_1 \times ( L_2 \uplus \{\hat 0_{L_2}\})$.
If $u_t = v_t $ then $f_t $ covers $f_{t-1}=k$ in $P_2$.  
We have \begin{eqnarray*} \lambda((u_{t-1},k), (u_t,k) ) &=&  (\lambda_1(u_{t-1},u_t),\hat 0_{L_2}) \\
&= & (\lambda_1(v_{t-1},v_t),\hat 0_{L_2}) \\ &< &(\lambda_1(v_{t-1},v_t),\lambda_2(f_{t-1},f_t)) \\ &=&\lambda((v_{t-1},f_{t-1}), (v_t,f_t) ).\end{eqnarray*} If $u_t \ne v_t $ then by the basic property of EL-labelings mentioned above,
$\lambda_1(u_{t-1},u_t) < \lambda_1(v_{t-1},v_t) $.   It follows that 
\begin{eqnarray*} \lambda((u_{t-1},k), (u_t,k) ) &=&  (\lambda_1(u_{t-1},u_t),\hat 0_{L_2}) \\
&< &(\lambda_1(v_{t-1},v_t),\lambda_2(f_{t-1},f_t)) \\ &=&\lambda((v_{t-1},f_{t-1}), (v_t,f_t) ).
\end{eqnarray*}

\end{proof}

\section{Ascent free chains of $P*T_{t,n}$} \label{ascfreesec} Let $P$ be a  semipure poset of length $n$.
Let $\lambda_P:\Cov( \hat P) \to L_P$ be an EL-labeling of $\hat P$ and let $\lambda_T$ be 
the semi-EL-labeling of $T_{t,n}$ in which each edge has label 1.   In this section we count the ascent free maximal 
chains of $\widehat{P*T_{t,n}}$ under the EL-labeling $\lambda:\Cov(\widehat{P*T_{t,n}})\to (L_P \times \{0<1\})$  described in 
Theorem \ref{ELth}.   

For $j = 0,\dots, m$, let $S_{m,j}$ be the set of sequences $(d_1,\dots, d_m) \in \{0,1\}^m$ such that  $\sum_{i=1}^{m}d_i=j$. Given any maximal chain $D = \{(\hat 0_P, \hat 0_T) < (x_0,f_0) < \dots <(x_m,f_m) <\hat 1\}$ of $\widehat{P*T_{t,n}}$, we have that $\{ x_0 <x_1 <\dots <x_m \}$ is a maximal chain of $P$ and $(r(f_1)-r(f_0), r(f_2)-r(f_1), \dots,r(f_m)-r(f_{m-1})) \in  S_{m,j}$, for some $j$.
Conversely,
given any maximal chain $C =\{x_0 <x_1 <\dots <x_m\}$ of $P$ and any $d\in  S_{m,j}$,
 there is a maximal chain $D = \{(\hat 0_P, \hat 0_T) < (x_0,f_0) < \dots <(x_m,f_m) <\hat 1\}$ of $\widehat{P*T_{t,n}}$ such that 
 $r(f_i)-r(f_{i-1}) = d_i$ for all $i\in [m]$.  Let $[C,d]$ be the set of  all such maximum chains of 
 $\widehat{P*T_{t,n}}$.  
   
  The following propositions clearly hold.
\begin{prop} \label{propnochains} The sets $[C,d]$, where $C $ is a maximal chain of $P$ of length $m$ and $d \in S_{m,j}$ for $j = 0,\dots, m$,  partition the set of maximum chains of 
$\widehat{P*T_{t,n}}$.  Moreover if $d \in S_{m,j}$ then $|[C,d]| = t^j$.
\end{prop}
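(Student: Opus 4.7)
The plan is to verify both assertions by unpacking the definition of the Rees product and the covers it induces; the argument is essentially bookkeeping, which is why the authors flag the claim as one that ``clearly holds.''

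\emph{Partition.} Given a maximal chain $D = \{(\hat 0_P, \hat 0_T) < (x_0,f_0) < \cdots < (x_m,f_m) < \hat 1\}$, the cover description of $P \ast Q$ forces each $x_{i+1}$ to cover $x_i$ in $P$ (so $C := \{x_0 < \cdots < x_m\}$ is saturated) and each $f_{i+1}$ either to equal $f_i$ or to cover it in $T_{t,n}$ (so $d_i := r(f_i) - r(f_{i-1}) \in \{0,1\}$, placing $d = (d_1, \ldots, d_m)$ in $S_{m,j}$ with $j = \sum_i d_i$). Since $(x_0,f_0)$ is a cover of the added minimum it must be a minimal element of $P \ast T_{t,n}$, which forces $f_0 = \hat 0_T$ together with $x_0$ minimal in $P$; symmetrically $(x_m, f_m)$ is maximal in $P \ast T_{t,n}$, so $x_m$ is maximal in $P$. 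Hence $C$ is a maximal chain of $P$ and $D \in [C,d]$. Because $(C,d)$ is uniquely determined by $D$, the sets $[C,d]$ are pairwise disjoint and exhaust the maximal chains of $\widehat{P \ast T_{t,n}}$.

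\emph{Cardinality.} Fix $C$ of length $m$ and $d \in S_{m,j}$, and enumerate the legal sequences $(f_0, \ldots, f_m)$. The value $f_0 = \hat 0_T$ is forced. For each $i \geq 1$: if $d_i = 0$ then $f_i = f_{i-1}$, so no choice arises; if $d_i = 1$ then $f_i$ must be a cover of $f_{i-1}$ in $T_{t,n}$, and because $T_{t,n}$ is a rooted $t$-ary tree, every element has exactly $t$ covers. Multiplying the contributions over the $j$ indices with $d_i = 1$ gives $t^j$.

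A final sanity check is that every tuple produced this way actually defines a chain in the Rees product: the condition $r_P(x_i) \geq r_T(f_i)$ holds because $r_P(x_i) = i \geq d_1 + \cdots + d_i = r_T(f_i)$. I anticipate no real obstacle: once the covering rule for $P \ast T_{t,n}$ and the $t$-ary tree structure of $T_{t,n}$ are written down, the two coordinates decouple and the count is mechanical.
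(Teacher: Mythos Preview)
Your proof is correct and supplies precisely the routine verification that the paper omits; the authors give no proof at all, declaring that the proposition ``clearly holds.'' One tiny quibble: the assertion that in $T_{t,n}$ ``every element has exactly $t$ covers'' is literally false for the leaves at height $n$, but your final sanity check already shows $r_T(f_{i-1}) = d_1+\cdots+d_{i-1} \le i-1 \le m-1 \le n-1$, so the $f_{i-1}$ you need to cover are never leaves and the count of $t^j$ goes through.
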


\begin{prop} \label{propascfree}   Let $$C:=\{x_0  <\dots <x_{m} \} $$ be a maximal chain of $P$ and let $d:= (d_1,\dots,d_m) \in \{0<1\}^m$. Then for each maximal chain $D \in [(C,d)]$ we have $$\lambda(D) = ((\lambda_P(\hat 0_P,x_0), 0), (\lambda_P(x_{0},x_1),d_1),\dots,(\lambda_P(x_{m-1},x_m),d_m), (\lambda_P(x_m,\hat 1_P),0)).$$   Consequently, $D$ is ascent free if and only if $\lambda_P(\hat 0_P,x_{0} ) \not\le \lambda_P(x_{0},x_1) $ and 
\begin{equation} \label{ascfreeeq} \forall i \in [m], \, \,\,\, \lambda_P(x_{i-1},x_i) \le \lambda_P(x_{i},x_{i+1} ) \implies d_i =1 \mbox{ and } d_{i+1} = 0 \end{equation}
holds. Here we have set $x_{m+1} := \hat 1_P$ and $d_{m+1} :=0$.
\end{prop}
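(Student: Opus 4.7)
The plan is to unfold the definition of the edge labeling $\lambda$ from Theorem \ref{ELth} on each cover of a chain $D \in [C,d]$, and then extract the ascent-free condition by comparing consecutive labels. The statement is essentially a structural computation, which I would organize in two steps.

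First, for the assertion on $\lambda(D)$, I would note that for any $D \in [C,d]$ the specific choice of $f_i$ at each step affects only the second coordinate of each label via $\lambda_T(f_{i-1},f_i)$. Since every edge of $T_{t,n}$ receives label $1$ from $\lambda_T$, this coordinate is determined by $d_i$ alone, so $\lambda(D)$ depends only on $(C,d)$. I would then apply the three cases of the labeling formula in Theorem \ref{ELth}: the bottom cover $(\hat 0_P, \hat 0_T) \lessdot (x_0, f_0)$ has $f_0 = \hat 0_T$ forced by the rank condition $r_P(x_0) = 0 \ge r_T(f_0)$, hence falls under the $k=l$ case and contributes $(\lambda_P(\hat 0_P, x_0), \hat 0_{L_2})$; each intermediate cover contributes $(\lambda_P(x_{i-1}, x_i), \hat 0_{L_2})$ when $d_i = 0$ and $(\lambda_P(x_{i-1}, x_i), 1)$ when $d_i = 1$, both instances unifying as $(\lambda_P(x_{i-1}, x_i), d_i)$ under the identification $\hat 0_{L_2} = 0$; and the top cover to $\hat 1$ contributes $(\lambda_P(x_m, \hat 1_P), \hat 0_{L_2})$ by the third case. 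Assembling these yields the claimed sequence.

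Second, for the ascent-free characterization, I would use that the partial order on $L_P \times (L_2 \uplus \{\hat 0_{L_2}\})$ is the componentwise product order --- precisely the order implicit in the label comparisons in the proof of Theorem \ref{ELth} --- so $(a,b) \le (a',b')$ iff $a \le a'$ in $L_P$ and $b \le b'$ in $L_2 \uplus \{\hat 0_{L_2}\}$. Comparing the first two labels, the relation $0 \le d_1$ is automatic, leaving ascent iff $\lambda_P(\hat 0_P, x_0) \le \lambda_P(x_0, x_1)$, whose negation is the first ascent-free condition. For each subsequent pair, using the convention $x_{m+1} = \hat 1_P$ and $d_{m+1} = 0$ to treat the terminal pair on the same footing, ascent occurs iff both $\lambda_P(x_{i-1}, x_i) \le \lambda_P(x_i, x_{i+1})$ and $d_i \le d_{i+1}$; the negation, combined with $d_i, d_{i+1} \in \{0, 1\}$, is precisely the implication (\ref{ascfreeeq}).

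The main obstacle is not technical but notational: one must be careful to use the componentwise (not lexicographic) order on the product label set $L_P \times (L_2 \uplus \{\hat 0_{L_2}\})$, since it is this order that permits either coordinate to independently defeat an ascent and hence yields the clean characterization. The remaining bookkeeping --- in particular the treatment of the initial cover (forced $f_0 = \hat 0_T$) and the terminal cover (using $d_{m+1} = 0$) --- is routine.
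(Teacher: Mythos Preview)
Your proposal is correct and is exactly the natural unwinding of definitions; the paper itself offers no proof, simply asserting that Propositions~\ref{propnochains} and~\ref{propascfree} ``clearly hold.'' Your explicit identification of the product (componentwise) order on $L_P \times (L_2 \uplus \{\hat 0_{L_2}\})$ and the cover-by-cover computation are the right way to make the assertion precise.
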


 Given a word $w=w_1\cdots w_n$ over a partially ordered alphabet $A$, we say $i\in [n-1]$ is an ascent of $w$ if $w_i \le w_{i+1}$ and  that $i \in [n-2]$ is a double ascent if  $w_i \le w_{i+1} \le w_{i+2}$.   Let $\asc(w)$ denote
the number of ascents of $w$ and $$\NDA_n(A):= \{w \in A^{n} : w \mbox{  has no double ascents} \}.$$

We are now ready to count the ascent free maximal chains.  We begin with the case in which the semipure poset $P$ has a unique maximum element.  In this case $P$ must necessarily be a pure poset of length $n$.  All maximal chains of $\hat P$ have length $n+2$ and must have an ascent at $n+1$ under the EL-labeling.   We leave it to the reader to observe that Propositions~\ref{propnochains} and~\ref{propascfree} imply the following result.

\begin{thm} \label{maxth} If $P$ has a unique maximum element then the number of ascent free maximal chains of $\widehat{P*T_{t,n}}$ of length $n+2$ under the EL-labeling of Theorem~\ref{ELth} is given by
$$ \sum_{\scriptsize\begin{array}{c}w \in  \NDA_{n+1}(L_P)\\ w_1 \not\le w_2 \\ w_{n} \not\le w_{n+1}\end{array} }  c(w) t^{\asc(w)+1} (1+t)^{n-1-2\asc(w)},$$ where $ c(w)$ is the number of maximal chains of $P \uplus {\hat 0_P}$  with label sequence $w$.
\end{thm}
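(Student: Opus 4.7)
The plan is to apply Propositions \ref{propnochains} and \ref{propascfree} directly, as the authors suggest. Since $P$ has a unique maximum element, it is pure of length $n$, so every maximal chain of $P$ has the form $C=\{x_0<x_1<\cdots<x_n\}$ where $x_n$ is the same unique maximum of $P$ in every case. By Proposition \ref{propnochains}, the maximal chains of $\widehat{P*T_{t,n}}$ partition into the classes $[C,d]$ indexed by such $C$ and $d\in\{0,1\}^n$, with $|[C,d]|=t^{|d|}$. I therefore plan to write the total count of ascent-free maximal chains as
\[
\sum_{C}\sum_{d\text{ valid}}t^{|d|},
\]
group by the label sequence $w=(w_1,\dots,w_{n+1})$ of the chain $\hat 0_P<x_0<\cdots<x_n$ in $P\uplus\hat 0_P$, and compute the inner sum in closed form.

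To compute the inner sum, fix such a $C$ with label sequence $w$ and set $\ell^*:=\lambda_P(x_n,\hat 1_P)$, the label of the top edge of $\hat P$. A brief but essential preliminary step is to verify that $w_{n+1}\le\ell^*$ always holds: because $P$ has a unique maximum, the interval $[x_{n-1},\hat 1_P]$ in $\hat P$ has only one maximal chain $x_{n-1}<x_n<\hat 1_P$, and the EL-property forces it to be weakly increasing. Setting $v:=(w_1,\dots,w_{n+1},\ell^*)$, Proposition \ref{propascfree} then says that $[C,d]$ contains an ascent-free chain precisely when $w_1\not\le w_2$ and the set
\[
A:=\{i\in[n]:v_{i+1}\le v_{i+2}\}
\]
forces $d_i=1$ and $d_{i+1}=0$ for every $i\in A$ (with the convention $d_{n+1}:=0$).

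A direct computation then gives $A=(B-1)\cup\{n\}$, where $B=\{j\in[n]:w_j\le w_{j+1}\}$ is the ordinary ascent set of $w$ and $n\in A$ by the preceding observation $w_{n+1}\le\ell^*$. The forcing on $d$ is consistent iff $A$ has no two consecutive elements, which unwinds to exactly two statements about $w$: that $w$ has no double ascent, and that $w_n\not\le w_{n+1}$. Adjoining the condition $w_1\not\le w_2$ recovers precisely the range of summation in the theorem. For such admissible $w$, the constraints force $|A|=\asc(w)+1$ entries of $d$ to $1$ and $|A|-1=\asc(w)$ entries to $0$ (the "$d_{i+1}=0$" arising from $i=n$ lands on the convention $d_{n+1}=0$ rather than a genuine entry of $d$), leaving $n-1-2\asc(w)$ free entries each contributing a factor $1+t$. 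Hence
\[
\sum_{d\text{ valid}}t^{|d|}=t^{\asc(w)+1}(1+t)^{n-1-2\asc(w)},
\]
and summing over $w$ with $c(w)$ counting the maximal chains of $P\uplus\hat 0_P$ realising $w$ yields the stated formula.

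The only delicate point, and the place requiring care, is the asymmetric bookkeeping at the top of the chain: the top edge $x_n<\hat 1_P$ contributes $n\in A$ and hence forces $d_n=1$, while the companion constraint $d_{n+1}=0$ is absorbed by the boundary convention rather than by a true entry of $d$. This single off-by-one is what produces the asymmetric exponents $\asc(w)+1$ and $n-1-2\asc(w)$, and it is also the one place where the unique-maximum hypothesis on $P$ is used essentially, via the EL-labeling property that yields $w_{n+1}\le\ell^*$.
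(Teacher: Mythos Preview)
Your proposal is correct and is precisely the argument the paper has in mind: the authors explicitly state that the result follows from Propositions~\ref{propnochains} and~\ref{propascfree} and leave the verification to the reader, noting only that every maximal chain of $\hat P$ has an ascent at position $n+1$ (your observation $w_{n+1}\le\ell^*$). Your careful bookkeeping of the forced entries of $d$ and the resulting exponent count fills in exactly those details.
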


In the general case in which it is not assumed  that $P$ has a unique maximum element, we have the following result which also is a consequence of  Propositions~\ref{propnochains} and~\ref{propascfree}.

\begin{thm} Let $m \in \N$.  Then the number  ascent free maximal chains of $\widehat{P*T_{t,n}}$ of length $m+2$ under the EL-labeling of Theorem~\ref{ELth} is given by
$$ \sum_{\scriptsize\begin{array}{c}w \in  \NDA_{m+2}(L_P)\\ w_1 \not\le w_2 \\ w_{m+1} \not\le w_{m+2}\end{array} } c(w) t^{\asc(w)} (1+t)^{m-2\asc(w)}$$
$$ +  \sum_{\scriptsize\begin{array}{c}w \in  \NDA_{m+2}(L_P)\\ w_1 \not\le w_2 \\ w_{m+1}\le w_{m+2}\end{array} } c(w) t^{\asc(w)} (1+t)^{m+1-2\asc(w)},$$
where  $c(w)$ is the number of  maximal chains of $\hat P$ of length $m+2$ 
with label sequence $w$.
\end{thm}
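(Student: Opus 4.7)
The plan is to leverage Propositions~\ref{propnochains} and~\ref{propascfree} to reduce the count to a weighted sum over label sequences of maximal chains of $\hat P$ of length $m+2$. By Proposition~\ref{propnochains}, every maximal chain of $\widehat{P*T_{t,n}}$ of length $m+2$ sits in a block $[C,d]$, where $C=\{x_0<\cdots<x_m\}$ is a maximal chain of $P$ of length $m$ and $d\in\{0,1\}^m$ records when the tree coordinate advances; the block has cardinality $t^{|d|}$ with $|d|:=\sum_i d_i$. Proposition~\ref{propascfree} shows that ascent-freeness is a property of the pair $(C,d)$, so the quantity I want to compute is
\[
\sum_{(C,d)\ \text{ascent free}} t^{|d|}.
\]

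Next I would group by the label sequence $w=(w_1,\dots,w_{m+2})$ of $C$ viewed as a chain of $\hat P$: there are $c(w)$ such chains, and the validity of $d$ depends only on $w$, so the count becomes $\sum_w c(w)\,W(w)$ with $W(w):=\sum_{d\ \text{valid for }w} t^{|d|}$. Translating Proposition~\ref{propascfree}, $d$ is valid precisely when $w_1\not\le w_2$ and every ascent $w_j\le w_{j+1}$ (with $j\in\{2,\dots,m+1\}$) forces $d_{j-1}=1$ and, when $j\le m$, also $d_j=0$. A double ascent would require some $d_i$ to equal both $0$ and $1$, so $W(w)=0$ unless $w\in\NDA_{m+2}(L_P)$.

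The final step is to evaluate $W(w)$ by splitting on whether $w_{m+1}\le w_{m+2}$, which is exactly the dichotomy appearing in the statement. If $w_{m+1}\not\le w_{m+2}$, then all $\asc(w)$ ascents of $w$ lie in $\{2,\dots,m\}$; each fixes two coordinates of $d$, and the no-double-ascent condition keeps the $2\asc(w)$ forced coordinates pairwise distinct, leaving $m-2\asc(w)$ coordinates free. Summing $t^{|d|}$ over valid $d$ then gives $W(w)=t^{\asc(w)}(1+t)^{m-2\asc(w)}$. If instead $w_{m+1}\le w_{m+2}$, the no-double-ascent hypothesis rules out $w_m\le w_{m+1}$; the terminal ascent forces only $d_m=1$, while the $\asc(w)-1$ interior ascents force $2(\asc(w)-1)$ further distinct coordinates in $\{1,\dots,m-1\}$, leaving $m+1-2\asc(w)$ coordinates free. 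Hence $W(w)=t^{\asc(w)}(1+t)^{m+1-2\asc(w)}$. Summing $c(w)W(w)$ over the two cases reproduces the formula.

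The only real work is the indexing bookkeeping; the no-double-ascent hypothesis together with $w_1\not\le w_2$ guarantees disjointness of every set of forced coordinates, so no genuine obstacle arises beyond careful accounting of which coordinates of $d$ each ascent pins down.
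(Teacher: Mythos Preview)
Your proposal is correct and follows exactly the approach the paper indicates: the paper states that this theorem is a consequence of Propositions~\ref{propnochains} and~\ref{propascfree}, and your argument fleshes out precisely that deduction. Your indexing translation from the ascent-free condition of Proposition~\ref{propascfree} to constraints on the coordinates $d_{j-1},d_j$ is accurate, and the case split on whether $w_{m+1}\le w_{m+2}$ together with the disjointness of forced coordinates (guaranteed by $w\in\NDA_{m+2}(L_P)$ and $w_1\not\le w_2$) yields the stated factors $t^{\asc(w)}(1+t)^{m-2\asc(w)}$ and $t^{\asc(w)}(1+t)^{m+1-2\asc(w)}$ exactly as required.
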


Note that if $P$ has a unique minimum element  then $P*T_{t,n}$ has unique minimum element, which implies that $P*T_{t,n}$ is contractible. Hence the number of ascent free maximal chains of $\widehat{P*T_{t,n}}$ has to be 0.  This is corroborated  by $c(w) = 0$ if $w_1 \not\le  w_2$, which follows from the fact that there is only one maximal chain in each interval $[\hat 0,a]$ of $\hat P$, where $a$ is an atom of $P$.  Therefore in the case that $P$ has a unique minimum element,  it is more interesting to consider the number of ascent free chains of the interval $ (P*T_{t,n})^+$ of $\widehat {P*T_{t,n}}$.  The following results also follow from Propositions~\ref{propnochains} and~\ref{propascfree}.

\begin{thm} \label{minmaxth} If $P$ has both a unique minimum element and a unique maximum element then the number of ascent free maximal chains of $(P*T_{t,n})^+$  under the EL-labeling of Theorem~\ref{ELth} is given by
$$ \sum_{\scriptsize\begin{array}{c}w \in  \NDA_{n}(L_P)\\ w_{n-1}\not\le w_{n}\end{array} }  c(w) t^{\asc(w)+1} (1+t)^{n-1-2\asc(w)},$$ where $ c(w)$ is the number of maximal chains of $P $  with label sequence $w$.
\end{thm}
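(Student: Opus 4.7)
The plan is to derive the formula by combining Propositions \ref{propnochains} and \ref{propascfree} with a short observation about the top cover of $\hat{P}$. Since $P$ has a unique minimum, $(P*T_{t,n})^+$ sits inside $\widehat{P*T_{t,n}}$ as the upper interval above the existing minimum $(\hat 0_P,\hat 0_T)$, so a maximal chain of $(P*T_{t,n})^+$ is obtained from a maximal chain of $\widehat{P*T_{t,n}}$ by discarding the lowest cover. By the analog of Proposition \ref{propnochains} for this interval, such a chain is parametrized by a pair $(C,d)$, where $C=\{\hat 0_P=x_0<x_1<\cdots<x_n=\hat 1_P\}$ is a maximal chain of $P$ and $d=(d_1,\ldots,d_n)\in\{0,1\}^n$, and the number of chains associated with a given $(C,d)$ is $t^{|d|}$ where $|d|=\sum d_i$. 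Setting $w_i:=\lambda_P(x_{i-1},x_i)$ for $i\in[n]$ and the constant $w_{n+1}:=\lambda_P(\hat 1_P,\hat 1)$ (the label of the unique cover above $\hat 1_P$ in $\hat P$), the label sequence is, by Proposition \ref{propascfree} with its leading label dropped,
\[
((w_1,d_1),\,(w_2,d_2),\,\ldots,\,(w_n,d_n),\,(w_{n+1},\hat 0_{L_2})).
\]
Ascent-freeness is thus equivalent to the implication $w_i\le w_{i+1}\Rightarrow d_i=1,\ d_{i+1}=0$ for every $i\in[n]$, under the convention $d_{n+1}:=0$.

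The crucial observation is that $w_n\le w_{n+1}$ always holds. Indeed, since $\hat 1_P$ is the unique maximum of $P$, the rank-two interval $[x_{n-1},\hat 1]$ in $\hat P$ contains exactly one maximal chain, namely $x_{n-1}\lessdot\hat 1_P\lessdot\hat 1$, and by the EL-property this unique chain must be weakly increasing. Hence position $n$ of the extended word $\tilde w:=(w_1,\ldots,w_{n+1})$ is always an ascent, so ascent-freeness forces $d_n=1$. For the full system of constraints to be simultaneously satisfiable, no two ascents of $\tilde w$ may be adjacent; combined with the automatic ascent at position $n$, this translates to precisely the two conditions $w\in\NDA_n(L_P)$ and $w_{n-1}\not\le w_n$ appearing in the sum.

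Finally, for each admissible $w$ with $k:=\asc(w)$ (all $k$ ascents lying in $[n-2]$), the extended word $\tilde w$ has $k+1$ ascents in total. Each ascent at position $i<n$ forces $(d_i,d_{i+1})=(1,0)$, and the ascent at position $n$ forces $d_n=1$; since the ascents are non-adjacent, the $2k+1$ constrained coordinates of $d$ are distinct and contribute exactly $t^{k+1}$ to the generating polynomial. The remaining $n-1-2k$ coordinates vary freely in $\{0,1\}$, each contributing a factor $1+t$. Multiplying by $c(w)$ and by the $t^{|d|}$ lift to $T_{t,n}$ (already folded into $t^{k+1}(1+t)^{n-1-2k}$) and summing over admissible $w$ yields the stated identity. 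The only delicate step is the verification that $w_n\le w_{n+1}$; once this is in place, the remainder is routine bookkeeping on the free and constrained coordinates of $d$.
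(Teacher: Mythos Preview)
Your proof is correct and follows essentially the same approach as the paper, which simply states that the result follows from Propositions~\ref{propnochains} and~\ref{propascfree}. The key observation you spell out---that the length-two interval $[x_{n-1},\hat 1]$ in $\hat P$ has a unique maximal chain and hence $w_n\le w_{n+1}$ is forced---is precisely the ``ascent at the top'' remark the paper makes just before Theorem~\ref{maxth}, adapted to the present setting where the bottom label has been dropped.
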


\begin{thm} Let $m \in \N$.   If $P$ has a unique minimum element then the number of ascent free maximal chains of $(P*T_{t,n})^+$ of length $m+1$ under the EL-labeling of Theorem~\ref{ELth} is given by
$$\sum_{\scriptsize\begin{array}{c}w \in  \NDA_{m+1}(L_P)\\ w_{m} \not\le w_{m+1}\end{array} } c(w) t^{\asc(w)} (1+t)^{m-2\asc(w)} $$  $$+ \sum_{\scriptsize\begin{array}{c}w \in  \NDA_{m+1}(L_P)\\ w_{m}\le w_{m+1}\end{array} } c(w) t^{\asc(w)} (1+t)^{m+1-2\asc(w)},$$ where $ c(w)$ is the number of maximal chains of $P^+ $ of length $m+1$ with label sequence $w$.
\end{thm}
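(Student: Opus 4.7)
The plan is to derive this theorem directly from Propositions~\ref{propnochains} and~\ref{propascfree}, following the method used in the proof of Theorem~\ref{minmaxth}. Since $P$ has a unique minimum $\hat 0_P$, a maximal chain of $(P \ast T_{t,n})^+$ of length $m+1$ is determined by a maximal chain $C = \{\hat 0_P = x_0 < x_1 < \cdots < x_m\}$ of $P$ of length $m$ (with $x_m$ a maximal element of $P$ of rank $m$), together with a sequence $d = (d_1,\ldots,d_m) \in \{0,1\}^m$ recording the rank increments of the $T$-coordinate and one of $t^{|d|}$ choices of path through $T_{t,n}$, where $|d| := \sum_i d_i$; this is the analogue of Proposition~\ref{propnochains} in the present setting.

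By Proposition~\ref{propascfree}, setting $w_i := \lambda_P(x_{i-1}, x_i)$ for $1 \le i \le m$ and $w_{m+1} := \lambda_P(x_m, \hat 1_P)$, the label sequence of such a chain under the EL-labeling is
\[
(w_1, d_1),\, (w_2, d_2),\, \ldots,\, (w_m, d_m),\, (w_{m+1}, 0),
\]
and the word $w := w_1 w_2 \cdots w_{m+1}$ is precisely the label sequence of the corresponding maximal chain of $P^+$; summing over $C$ therefore becomes summing over $w$ weighted by $c(w)$. Because $(P \ast T_{t,n})^+$ contains no bottom cover below $(\hat 0_P, \hat 0_T)$, the bottom-edge condition from Proposition~\ref{propascfree} is absent, and the chain is ascent-free if and only if, with $d_{m+1} := 0$, the implication
\[
w_i \le w_{i+1} \ \Longrightarrow\ d_i = 1 \text{ and } d_{i+1} = 0
\]
holds for every $i \in [m]$. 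These constraints are simultaneously satisfiable exactly when $w$ has no double ascent, i.e., $w \in \NDA_{m+1}(L_P)$.

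The main computation is, for each such $w$, to evaluate $\sum_{d \text{ valid}} t^{|d|}$, which I would split according to whether $w_m \le w_{m+1}$. If $w_m \not\le w_{m+1}$, every ascent of $w$ lies in $[m-1]$; each ascent $i$ forces the pair $(d_i, d_{i+1}) = (1,0)$, and the no-double-ascent hypothesis ensures that these pairs are pairwise disjoint, giving $2\asc(w)$ forced coordinates contributing $t^{\asc(w)}$ and $m - 2\asc(w)$ free coordinates contributing $(1+t)^{m-2\asc(w)}$. If $w_m \le w_{m+1}$, the extra constraint $d_m = 1$ applies; the no-double-ascent hypothesis forces position $m-1$ to be a non-ascent, so this single additional forced coordinate is disjoint from the earlier pairs, producing $t^{\asc(w)}(1+t)^{m+1-2\asc(w)}$. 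Weighting each contribution by $c(w)$ and summing over $w$ yields the stated formula.

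The chief technical point is verifying the disjointness and mutual consistency of the forced coordinates in each case, but this follows directly from the no-double-ascent property of $w$, so the rest of the argument amounts to routine bookkeeping.
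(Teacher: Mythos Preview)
Your proposal is correct and follows essentially the same approach as the paper, which simply states that the theorem is a consequence of Propositions~\ref{propnochains} and~\ref{propascfree} without writing out details. You have faithfully carried out the intended bookkeeping: identifying maximal chains of $(P\ast T_{t,n})^+$ with pairs $(C,d)$, reading off the ascent-free condition from Proposition~\ref{propascfree} with the bottom edge removed, and counting valid $d$'s in the two cases according to whether $m$ is an ascent of $w$.
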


For pure $P$ we can restate the above results by applying Theorem~\ref{rankselth}.  
We need to recall the following terminology and notation. A set of integers is {\it stable} if it contains no two consecutive integers.  For $X\subseteq \Z$,  the set of all stable subsets of $X$ is denoted by  $\psx(X)$. For $i \le j \in \N$, let
$[i,j] :=\{i,i+1,\dots,j\}$ and $[j]:=[1,j]$.  If $P$ is a poset of length $n$ let $$\beta (P): = \rk \tilde H_{n}(P).$$ If $P$ has a unique minimum element $\hat 0$ let
$$P^- := P \setminus \{\hat 0 \}.$$

\begin{cor} \label{mobcor1} Let $P$ be a pure  poset of length $n$ such that  $\hat P$ is  EL-shellable.
Assume that $P$ has a unique maximum element.  Then
\begin{equation} \label{minmaxeq1} \beta(P * T_{t,n}) = \sum_{S \in \psx([n-2])} \beta(P_{[0,n-1]\setminus S} ) t^{|S|+1} (t+1)^{n-2|S|-1 }.\end{equation}
If $P$  also has a unique minimum element then
\begin{equation}\label{minmaxeq2} \beta((P * T_{t,n})^-) = \sum_{S \in \psx([n-2])} \beta(P_{[n-1]\setminus S}) t^{|S|+1} (t+1)^{n-2|S|-1 }.\end{equation}
\end{cor}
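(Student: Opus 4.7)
The plan is to combine Theorems~\ref{maxth} and~\ref{minmaxth}, which enumerate the ascent-free maximal chains, with Theorem~\ref{rankselth}, which interprets counts of labeled maximal chains as rank-selected Betti numbers, after grouping the label words by their ascent sets; Theorem~\ref{elth} supplies the bridge from ascent-free chain counts to $\beta$ of the appropriate Rees product.

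For~\eqref{minmaxeq1}, I would first observe that $\widehat{P*T_{t,n}}$ is EL-shellable by Theorem~\ref{ELth}, so Theorem~\ref{elth} identifies $\beta(P*T_{t,n})$ with the number of ascent-free maximal chains of $\widehat{P*T_{t,n}}$ of length $n+2$; Theorem~\ref{maxth} then writes this count as a weighted sum over $w \in \NDA_{n+1}(L_P)$ satisfying $w_1 \not\le w_2$ and $w_n \not\le w_{n+1}$. These conditions force the ascent set $A(w) \subseteq [n]$ to be stable and to miss $\{1,n\}$, so the shift $S := A(w) - 1$ puts $A(w)$ in bijection with $\psx([n-2])$, with $\asc(w) = |S|$. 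The resulting inner sum $\sum_{w : A(w) = S+1} c(w)$ counts maximal chains of $P \uplus \{\hat 0_P\}$ with descent set $[n]\setminus(S+1)$. Because $P$ has a unique maximum, $P \uplus \{\hat 0_P\}$ may be identified with $\hat P$ with its top element removed, hence is bounded and inherits EL-shellability from $\hat P$. Theorem~\ref{rankselth} applied to $P \uplus \{\hat 0_P\}$ then converts the inner sum into $\beta\bigl((P \uplus \{\hat 0_P\})_{[n]\setminus(S+1)}\bigr)$, and the rank shift $(P \uplus \{\hat 0_P\})_T = P_{T-1}$ rewrites this as $\beta(P_{[0,n-1]\setminus S})$, yielding~\eqref{minmaxeq1}.

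For~\eqref{minmaxeq2}, the extra unique-minimum hypothesis implies that $(P*T_{t,n})^+$ coincides with the interval of $\widehat{P*T_{t,n}}$ from the (now unique) minimum of $P*T_{t,n}$ up to $\hat 1$, and so inherits EL-shellability from Theorem~\ref{ELth}. Its proper part is $(P*T_{t,n})^-$, so Theorem~\ref{elth} identifies $\beta((P*T_{t,n})^-)$ with the ascent-free chain count of Theorem~\ref{minmaxth}, namely a weighted sum over $w \in \NDA_n(L_P)$ with $w_{n-1}\not\le w_n$. Here $A(w) \in \psx([n-2])$ directly, with no shift, and $P$ itself is bounded and inherits an EL-labeling from $\hat P$ since intervals of $P$ are intervals of $\hat P$. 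Theorem~\ref{rankselth} applied to $P$ then converts $\sum_{w : A(w) = S} c(w)$ into $\beta(P_{[n-1]\setminus S})$, and~\eqref{minmaxeq2} follows.

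The main obstacle is bookkeeping rather than structural: chains of $P$ produce label words of length $n$ while chains of $P \uplus \{\hat 0_P\}$ produce words of length $n+1$, and the ranks of these two posets differ by a shift of one. The slightly delicate point is to verify that the endpoint-descent conditions in Theorems~\ref{maxth} and~\ref{minmaxth} correspond precisely to forbidding the extremal positions in $A(w)$, so that the ascent set is parameterized cleanly by $\psx([n-2])$; once this is checked the two identities drop out by regrouping.
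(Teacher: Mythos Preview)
Your proposal is correct and is precisely the argument the paper has in mind: the text simply says ``For pure $P$ we can restate the above results by applying Theorem~\ref{rankselth},'' and you have accurately supplied the bookkeeping that turns Theorems~\ref{maxth} and~\ref{minmaxth} into Corollary~\ref{mobcor1} via Theorems~\ref{elth} and~\ref{rankselth}. The shift $S=A(w)-1$ for~\eqref{minmaxeq1} versus $S=A(w)$ for~\eqref{minmaxeq2}, the identification of $P\uplus\{\hat 0_P\}$ (resp.\ $P$) with an interval of $\hat P$ to inherit EL-shellability, and the rank shift $(P\uplus\{\hat 0_P\})_T=P_{T-1}$ are all handled correctly.
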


\begin{cor} \label{mobcor2}  Let $P$ be a pure  poset of length $n$ such that  $\hat P$ is  EL-shellable.   Then  $$\beta(P * T_{t,n})=$$
$$\sum_{S \in \psx([n-1])} \beta ( P_{[0,n]\setminus S}) t^{|S|} (t+1)^{n-2|S| } +  \sum_{S \in \psx([n-2])}       \beta ( P_{[0,n-1]\setminus S}) t^{|S|+1} (t+1)^{n-2|S|-1 } .$$
If $P$  has a unique minimum element then
$$\beta((P * T_{t,n})^-) =$$ $$   \sum_{S \in \psx([n-1])} \beta ( P_{[n]\setminus S}) t^{|S|} (t+1)^{n-2|S| } +  \sum_{S \in \psx([n-2])}       \beta( P_{[n-1] \setminus S} ) t^{|S|+1} (t+1)^{n-2|S|-1 } .$$
\end{cor}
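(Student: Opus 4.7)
The plan is to deduce both equations from the two general chain-counting theorems of Section~\ref{ascfreesec} (the one immediately following Theorem~\ref{maxth} for the first equation, and the final theorem of the section for the second) by combining them with Theorems~\ref{elth} and~\ref{rankselth}.

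For the first equation, Theorem~\ref{ELth} makes $\widehat{P\ast T_{t,n}}$ EL-shellable, so Theorem~\ref{elth} gives
$$\beta(P\ast T_{t,n})=\#\{\text{ascent-free maximal chains of }\widehat{P\ast T_{t,n}}\}.$$
Since $P$ is pure of length $n$, only chains of length $n+2$ contribute, so I would apply the relevant chain-counting theorem with $m=n$ to obtain two sums over words $w\in\NDA_{n+2}(L_P)$.  I would then reparametrize each sum by the ascent set $A\subseteq[n+1]$ of $w$.  In the first subsum, $w_1\not\le w_2$ and $w_{n+1}\not\le w_{n+2}$ force $1,n+1\notin A$, so $A\subseteq\{2,\ldots,n\}$ is stable; set $S:=A-1\in\psx([n-1])$.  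In the second subsum, the conditions force $1\notin A$ and $n+1\in A$, hence $n\notin A$ by stability; set $S:=(A\setminus\{n+1\})-1\in\psx([n-2])$.  In either case $\asc(w)$ and the weight depend only on $|S|$, and the exponents $(|S|, n-2|S|)$ and $(|S|+1, n-1-2|S|)$ match those in the stated formula.

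Next I would group the words $w$ by their descent set $D=[n+1]\setminus A$.  Since $\hat P$ is pure and EL-shellable of length $n+2$, Theorem~\ref{rankselth} yields $\sum_{w:\,\Des(w)=D}c(w)=\beta(\hat P_D)$.  Finally, since ranks in $\hat P$ are shifted by $1$ from those in $P$, we have $\hat P_D=P_{D-1}$, and $D-1$ unwinds to $[0,n]\setminus S$ in the first subsum and to $[0,n-1]\setminus S$ in the second, producing the desired identity.

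The second equation follows by the same recipe applied to $(P\ast T_{t,n})^+$, using the final theorem of Section~\ref{ascfreesec} in place of the general chain-counting theorem and Theorem~\ref{rankselth} applied to $P^+$.  Since ranks in $P^+$ agree with ranks in $P$ on elements of $P$, one has $(P^+)_D=P_D$ directly (no rank shift), and the analogous reparametrizations produce $[n]\setminus S$ (resp.\ $[n-1]\setminus S$) in the two sums.  The main obstacle throughout is purely bookkeeping: keeping the three indexing conventions for $P$, $\hat P$, and $P^+$ aligned across the rank shifts, and verifying that stability of $A$ together with the endpoint constraints produces exactly the index sets $\psx([n-1])$ and $\psx([n-2])$ that appear in the corollary.
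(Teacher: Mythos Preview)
Your proposal is correct and follows exactly the route the paper intends: the corollary is stated immediately after the chain-counting theorems with the remark that one ``restate[s] the above results by applying Theorem~\ref{rankselth},'' and your argument carries this out in detail, including the rank-shift bookkeeping for $\hat P$ versus $P^+$. The only point worth making explicit is that $P^+$ is EL-shellable because it is the interval $[\hat 0_P,\hat 1_{\hat P}]$ inside $\hat P$, which justifies invoking Theorem~\ref{rankselth} there.
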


\part{Applications}

\section{Symmetric function preliminaries} \label{symmsec}

Let $h_n= h_n(x_1,x_2,\dots)$ denote the complete homogenous symmetric function of degree $n$  in indeterminants ${\bf x}:=x_1,x_2\dots$ and  $e_n=e_n(x_1,x_2,\dots)$  denote the elementary symmetric function  of degree $n$ in indeterminants ${\bf x}$. That is 
$$h_n({\bf x}) := \sum_{1 \le i_i  \le \dots \le i_n} x_{i_1} \cdots x_{i_n}  \mbox { and } e_n({\bf x}) := \sum_{1 \le i_i  < \dots < i_n} x_{i_1} \cdots x_{i_n}. $$ Also let 
$$[n]_t := 1+t+ \dots + t^{n-1}.$$  In this section we will discuss various combinatorial  interpretations of  variations of the symmetric function 
$${{ \sum_{i\ge 0} h_i z^i} \over {1 - \sum_{i \ge 2} t[i-1]_t h_i z^i}} \, , $$
which play a key role in the proofs of the results in the subsequent sections.  These and other interpretations are discussed in \cite[Section 7]{ShWa}.

 Let $w=w_1\cdots w_n \in \pp^n$.
Recall that we say $i\in [n-1]$ is an ascent of $w$ if $w_i \le w_{i+1}$ and  that $i \in [n-2]$ is a double ascent if  $w_i \le w_{i+1} \le w_{i+2}$.   Recall that $\asc(w)$ denotes
the number of ascents of $w$ and $$\NDA_n:= \NDA_n(\pp)= \{w \in \pp^{n} : w \mbox{  has no double ascents} \}.$$  Similarly,   $i\in [n-1]$ is a descent of $w$ if $w_i> w_{i+1}$ and  $i \in [n-2]$ is a double descent if  $w_i > w_{i+1} > w_{i+2}$.   Let $\des(w)$ denote
the number of descents of $w$ and $$\NDD_n:= \NDD_n(\pp)= \{w \in \pp^{n} : w \mbox{  has no double descents} \}.$$
We write ${\bf x}_w$ for $x_{w_1} \cdots x_{w_n}$.  

We begin by presenting the following interpretations due to Gessel, see Theorem~7.3 of \cite{ShWa}.  (Gessel's original proofs  will appear in \cite{GW}.)

  \begin{equation}\label{ges1} 1+\sum_{n \ge 1} z^n \sum_{\scriptsize\begin{array}{c} w \in \NDA_n\\ w_1 >w_2\\ w_{n-1} >w_n \end{array}}  t^{\asc(w)} (1+t) ^{n-2-2\asc(w)} {\bf x}_w= {{1} \over {1 - \sum_{i \ge 2} t[i-1]_t e_i z^i}} \,\, ,\end{equation} 
 \begin{equation} \label{ges2} 1+\sum_{n \ge 1} z^n \sum_{\scriptsize\begin{array}{c} w \in \NDA_n\\ w_{n-1} >w_n \end{array}}  t^{\asc(w)} (1+t) ^{n-1-2\asc(w)} {\bf x}_w= {{ \sum_{i\ge 0} e_i z^i} \over {1 - \sum_{i \ge 2} t[i-1]_t e_i z^i}} \,\, ,\end{equation}

  \begin{equation}\label{ges3} 1+\sum_{n \ge 1} z^n \sum_{\scriptsize\begin{array}{c} w \in \NDD_n\\ w_1 \le w_2\\ w_{n-1} \le w_n \end{array}}  t^{\des(w)} (1+t) ^{n-2-2\des(w)} {\bf x}_w= {{1} \over {1 - \sum_{i \ge 2} t[i-1]_t h_i z^i}} \,\, ,\end{equation} 
 \begin{equation} \label{ges4} 1+\sum_{n \ge 1} z^n \sum_{\scriptsize\begin{array}{c} w \in \NDD_n\\ w_{n-1} \le w_n \end{array}}  t^{\des(w)} (1+t) ^{n-1-2\des(w)} {\bf x}_w= {{ \sum_{i\ge 0} h_i z^i} \over {1 - \sum_{i \ge 2} t[i-1]_t h_i z^i}} \,\, .\end{equation}  
 
Next we present an interpretation due to Shareshian and Wachs \cite{ShWa}.  A barred word of length $n$ over  alphabet $A$ is an element of $(A \times \{0,1\})^n$.  We visualize barred words as words over $A$ in which some of the letters are barred; $(a,1)$ is a barred letter and $(a,0)$ is an unbarred letter.   If $w$ is a barred word then $|w|$ denotes the word $w$ with the bars removed.  Similarly,  let $ |a|=|\bar{a}|=a$.  If $\alpha$ is a barred or unbarred letter, we refer to $|\alpha|$ as the {\it absolute value} of  $\alpha$.  For a barred word $w$, let $\bars(w)$ denote the number of barred letters of $w$. Let $ W_n$ be the set of barred words $w=w_1\cdots w_n$   of length $n$ over $\pp$ satisfying 
\begin{enumerate}
\item $w_n$ is unbarred
\item for all $i \in[n-1]$, if $|w_i| < |w_{i+1}|$ then $w_i $ is unbarred 
\item for all $i \in[n-1]$,  if $|w_i| > |w_{i+1}|$ then $w_i$ is barred.
\end{enumerate}
Elements of $W_n$ are called {\em banners} in \cite[Section 3]{ShWa}, where it is shown that
\begin{equation}\label{bannereq} 1+ \sum_{n\ge 1} z^n \sum_{w \in  W_n} t^{\bars(w)} {\bf x}_{|w|} = {{ \sum_{i\ge 0} h_i z^i} \over {1 - \sum_{i \ge 2} t[i-1]_t h_i z^i}} \,\, .\end{equation}

We will also need an  interpretation due to Askey and Ismail \cite{ai}  and one due to Stanley (personal communication, see Theorem~7.2 of \cite{ShWa}).
Given a finite multiset  $M$  over $\pp$, let $\sg_M$ denote the set of multiset permutations of $M$.  Recall that we can 
write  $w \in \sg_M$  in two-line notation as  a $2 \times |M|$  array $(w_{i,j}) $ whose top row is a weakly increasing 
arrangement of the multiset $M$ and whose bottom row is an arbitrary arrangement of $M$.   By supressing 
the top row, we write $w$ in one-line notation as the word, $w_{1} \dots w_{|M|}$, where $w_i := w_{2,i}$.  If $w \in \sg_M$ we say that $w$ has 
length $|M|$.  An excedance of a multiset permutation  $w = (w_{i,j})$, written in two-line notation,  is a column 
$j$ such that $w_{1,j} < w_{2,j}$. Let $\exc(w)$ be the number of excedances of $w$.    

Recall that  $w = (w_{i,j}) \in \sg_M$ is  a multiset derangement if each of the columns of  $w$ have distinct entries, i.e,. $w(1,j)  \ne w(2,j)$ for all $j=1,\dots,|M|$. 
  For example, if
 $$w = \bmatrix 1 & 1& 1& 2& 3& 3 & 4\\ 3 & 2 & 3& 1& 4& 1& 1 \endbmatrix $$
then $w$ is a multiset derangement in $\sg_{\{1^3,2,3^2,4\}}$  and $\exc(w) = 4$.

Now let $\mathcal {MD}_n$ be the set of all multiset derangements of length $n$. 
Askey and Ismail \cite{ai} (see also \cite{kz}) 
proved the following $t$-analog of MacMahon's \cite[Sec. III, Ch. III]{mac1}
result on multiset derangements  \begin{equation}\label{macderang}  \sum_{n \ge 0} z^n  \sum_{w\in \mathcal {MD}_{n}}  t^{\exc(w)}{\mathbf x}_w = 
{ 1\over 1 - \sum_{i \ge 2} t[i-1]_t e_i z^i}.\end{equation}

Recall from Section~\ref{secintro} that a multiset permutation $w = w_1 \cdots w_n \in \sg_M$ is  called a Smirnov word if it has no adjacent repeats, i.e. $w_i \ne w_{i+1}$ for all $i = 1,\dots,n-1$.   
Let $\mathcal {SW}_n$ be the set of all Smirnov words of length $n$.   Stanley (see Theorem~7.2 and (7.7) of \cite{ShWa}) observed that  the following
 $t$-analog of a result of Carlitz, Scoville and Vaughan \cite{csv}
   \begin{equation}\label{smirnov}  
\sum_{n\ge 0} z^n \sum_{w\in \mathcal {SW}_{n}}  t^{\des(w)}\mathbf x_w  = 
{{ \sum_{i\ge 0} e_i z^i} \over {1 - \sum_{i \ge 2} t[i-1]_t e_i z^i}} \end{equation}
is equivalent to  (\ref{bannereq}) by P-partition reciprocity (\cite[Section 4.5]{st4}).

\section{Chain product analog of $B_n$} \label{chainsec}

 In this section we generalize the $q=1$ case of  (\ref{gtreeintro1}) and (\ref{gtreeintro2}) by utilizing the results of the previous section.  Given a weak composition $\mu:=(\mu_1,\dots,\mu_k)$  of $n$, let $B_\mu$ denote the product of chains $C_{\mu_1} \times \dots \times C_{\mu_k}$. Recall  that 
   $M(\mu)$ denotes the mulitset $ \{1^{\mu_1},\dots , k^{\mu_k}\}$.
Given a multiset $M$, let $\mathcal{MD}_{M}$ be the set of multiset derangements of the multiset $M$
 and let $\mathcal{SW}_{M}$ be the set of Smirnov words that are multiset permutations of $M$.

    \begin{thm} \label{multirees} Let $\mu$  be a composition of $n$.  Then $B_\mu^- * T_{t,n-1}$ and $(B_\mu * T_{t,n})^-$ have the homotopy type of a wedge of
$(n-1)$-spheres.  The numbers of spheres in these wedges are,
respectively,
 \begin{equation} \label{deraneq} \beta(B_\mu^- * T_{t,n-1}) = \sum_{w \in \mathcal{MD}_{M(\mu)} } t^{\exc(w)+1}\end{equation}
 and
\begin{equation}\label{smireq} \beta( (B_\mu * T_{t,n})^-)= \sum_{w \in \mathcal{SW}_{M(\mu)} } t^{\des(w)+1}.
\end
{equation}

\end{thm}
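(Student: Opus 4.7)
The plan is to apply the ascent-free chain counts of Theorems \ref{maxth} and \ref{minmaxth} to the natural EL-labeling of $B_\mu$, and then convert the resulting word enumerations into enumerations of multiset derangements and Smirnov words via the symmetric-function identities assembled in Section \ref{symmsec}.

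First I would fix the EL-labeling $\lambda$ of $\hat B_\mu$ (and of $\widehat{B_\mu^-}$) obtained by labeling a cover $x\lessdot x+e_j$ in $B_\mu$ by the coordinate index $j\in [k]$, with the auxiliary edges introduced by the $\widehat{\,\cdot\,}$ construction receiving artificial labels placed below and above the alphabet $[k]$. That $\lambda$ is an EL-labeling is the well-known EL-shellability of a product of chains. Under $\lambda$, maximal chains of $B_\mu$ (and, equivalently, of $B_\mu^-\uplus\{\hat 0_{B_\mu^-}\}\cong B_\mu$) correspond bijectively to multiset permutations $w\in\sg_{M(\mu)}$, with the label sequence of a chain equal to $w$ itself; the ascents and descents of the chain are the positions $i$ with $w_i\le w_{i+1}$ and $w_i>w_{i+1}$, respectively. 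By Bj\"orner--Welker \cite{bw}, both $B_\mu^-*T_{t,n-1}$ and $(B_\mu*T_{t,n})^-$ are pure of length $n-1$, and by Theorem \ref{ELth} combined with Theorem \ref{elth} each is homotopy equivalent to a wedge of $(n-1)$-spheres.

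For (\ref{smireq}), since $B_\mu$ has a unique minimum and maximum, I would apply Theorem \ref{minmaxth} with $c(w)=[w\in\sg_{M(\mu)}]$. Combining this with Theorem \ref{elth} (applied to $(B_\mu*T_{t,n})^+$, whose open interior is $(B_\mu*T_{t,n})^-$) yields
\[ \beta((B_\mu*T_{t,n})^-) = \sum_{\substack{w\in\NDA_n\cap\sg_{M(\mu)}\\ w_{n-1}>w_n}} t^{\asc(w)+1}(1+t)^{n-1-2\asc(w)}. \]
The right-hand sides of Gessel's identity (\ref{ges2}) and Stanley's identity (\ref{smirnov}) coincide, so extracting the coefficient of $x_1^{\mu_1}\cdots x_k^{\mu_k}z^n$ from each left-hand side and multiplying by $t$ rewrites the displayed sum as $\sum_{w\in\mathcal{SW}_{M(\mu)}}t^{\des(w)+1}$.

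For (\ref{deraneq}), I would instead invoke Theorem \ref{maxth} for $P=B_\mu^-$ (unique maximum, no unique minimum), again with $c(w)=[w\in\sg_{M(\mu)}]$; substituting $n-1$ for $n$ in the theorem and appealing to Theorem \ref{elth} gives
\[ \beta(B_\mu^-*T_{t,n-1}) = \sum_{\substack{w\in\NDA_n\cap\sg_{M(\mu)}\\ w_1>w_2,\ w_{n-1}>w_n}} t^{\asc(w)+1}(1+t)^{n-2-2\asc(w)}. \]
Gessel's identity (\ref{ges1}) and the Askey--Ismail identity (\ref{macderang}) share a common generating function, so matching coefficients of $x_1^{\mu_1}\cdots x_k^{\mu_k}z^n$ and multiplying by $t$ identifies this sum with $\sum_{w\in\mathcal{MD}_{M(\mu)}}t^{\exc(w)+1}$.

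The main obstacle is almost entirely bookkeeping: verifying that the coordinate labeling extends consistently through the $\widehat{\,\cdot\,}$ and $\uplus\{\hat 0\}$ constructions, and that the boundary conditions ($w_1\not\le w_2$, $w_{n-1}\not\le w_n$) and the precise exponents of $t$ and $1+t$ produced by Theorems \ref{maxth} and \ref{minmaxth} align exactly with those appearing in the relevant Gessel identity. The substantive combinatorial work has already been packaged into the symmetric-function identities collected in Section \ref{symmsec}.
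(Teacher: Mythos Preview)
Your proposal is correct and follows essentially the same route as the paper's own proof: apply the coordinate EL-labeling of $B_\mu$, invoke Theorems~\ref{maxth} and~\ref{minmaxth} (via Theorem~\ref{ELth} and Theorem~\ref{elth}) to express the two Betti numbers as $\NDA$-word enumerators over $\sg_{M(\mu)}$, and then equate generating functions using (\ref{ges1})/(\ref{macderang}) and (\ref{ges2})/(\ref{smirnov}) respectively. The only cosmetic difference is that you cite \cite{bw} for purity whereas the paper gets everything directly from EL-shellability.
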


\begin{proof} 
 We begin by applying Theorem~\ref{maxth} to $P:= B_\mu^-$, which has length $n-1$.  Let  $k=l(\mu)$.  There is a well-known  EL-labeling of $B_\mu$ in which the edge $$((x_1,\dots,x_i,\dots,x_k), (x_1,\dots,x_{i+1},\dots,x_k))$$ is labeled by $i$.  Here $L_P $ is the totally ordered set $ \{1<2<\dots <k\}$.  Hence $ \widehat{B_\mu^- * T_{t,n-1}} $  has an EL-labeling as described
in Theorem~\ref{ELth}.
 The label sequence of each maximal chain of $P\uplus \{\hat 0 \} = B_\mu$ is a permutation of the multiset $M(\mu)$. Moreover each mulitset permutation occurs exactly once as the label sequence of a maximal chain.  So  $c(w) = 1$ if  $w\in \mathfrak S_{M(\mu)}$ and $c(w) = 0$ if $w \in [k]^n - \mathfrak S_{M(\mu)}$.  
 It  follows from Theorem~\ref{maxth} that  the number of ascent free maximal chains of $ \widehat{B_\mu^- * T_{t,n-1}}$ under the given labeling is 
$$\sum_{\scriptsize\begin{array}{c} w \in \NDA_n \cap \mathfrak S_{M(\mu)} \\ w_1 > w_2 \\ w_{n -1} > w_n \end{array}} t^{\asc(w) +1}(1+t)^{n-2-2\asc(w)}.$$ 
Similarly by Theorems~\ref{ELth} and~\ref{minmaxth} with $P= B_\mu$, the poset $(B_\mu * T_{t,n})^+$ has an EL-labeling for which the number of ascent free maximal chains is 
$$\sum_{\scriptsize\begin{array}{c} w \in  \NDA_n \cap \mathfrak S_{M(\mu)}\\ w_{n -1} > w_n \end{array}} t^{\asc(w) +1}(1+t)^{n-1-2a(w)}.$$  Hence by Theorem~\ref{elth}, the posets $B_\mu^- * T_{t,n-1}$ and $(B_\mu * T_{t,n})^-$ have the homotopy type of a wedge of $(n-1)$-spheres and the top Betti numbers are given by
$$\beta(B_\mu^- * T_{t,n-1}) =\sum_{\scriptsize\begin{array}{c} w \in \NDA_n \cap \mathfrak S_{M(\mu)} \\ w_1 > w_2 \\ w_{n -1} > w_n \end{array}} t^{\asc(w) +1}(1+t)^{n-2-2\asc(w)},$$ and 
$$\beta( (B_\mu * T_{t,n})^-) =\sum_{\scriptsize\begin{array}{c} w \in \NDA_n \cap \mathfrak S_{M(\mu)} \\ w_{n -1} > w_n \end{array}} t^{\asc(w) +1}(1+t)^{n-1-2\asc(w)}.$$  

 By combining  (\ref{macderang}) and (\ref{ges1}) we obtain 
 \begin{equation}\label{gesmac}\sum_{w \in \mathcal {MD}_M} t^{\exc(w) } = \sum_{\scriptsize\begin{array}{c} w \in \NDA_n \cap \sg_M \\ w_1>w_2 \\ w_{n-1} > w_n \end{array}} t^{\asc(w)} (1+t) ^{n-2-2\asc(w)}, \end{equation}
 and by combining (\ref{smirnov}) and (\ref{ges2}) we obtain
\begin{equation} \label{gesstan} \sum_{w \in \mathcal {SW}_M} t^{\des(w) } = \sum_{\scriptsize\begin{array}{c} w \in \NDA_n \cap \sg_M
\\ w_{n-1} > w_n \end{array}} t^{\asc(w)} (1+t) ^{n-1-2\asc(w)}, \end{equation}
for all multisets $M$ on $\pp$ of size $ n$.
Equations (\ref{deraneq}) and (\ref{smireq}) now follow from (\ref{gesmac}) and (\ref{gesstan}), respectively. \end{proof}

\begin{remark} When $M=\{1^n\}$, equation (\ref{gesstan}) reduces to a result of Foata and Sch\"utzenberger \cite{fs}, which is used to show that the Eulerian polynomials are palindromic and unimodal.  We see from (\ref{gesmac}) and (\ref{gesstan}), respectively,  that the polynomials $\sum_{w \in \mathcal {MD}_M} t^{\exc(w) } $ and  $\sum_{w \in \mathcal {SW}_M} t^{\des(w) } $ are palindromic and unimodal for all multisets $M$.
\end{remark}

\section{$q$-analog of $B_n$} \label{qbnsec} 

The lattice
$B_n(q)$ of subspaces of an $n$-dimensional vector space over the finite
field $\ff_q$ is bounded and pure of length $n$.  It is well known that
$B_n(q)$ is EL-shellable (see \cite{w1}).  Using (3.3) to compute
$\beta((B_n(q) * T_{t,n})^-)$ and equating the resulting formula with the formula given in
(\ref{gtreeintro2}), we obtain a new Mahonian permutation statistic, which we call $\aid$,
and we show that the pairs $(\aid,\des)$ and $(\maj,\exc)$ are equidistributed on
$\S_n$.

Let $\sigma \in \mathfrak S_n$.  Recall that an {\it inversion} of $\sigma$ is a pair $(\sigma(i),
\sigma(j))$ such that  $ 1 \le i < j \le n$ and $\sigma(i) > \sigma(j)$.   An {\it admissible
inversion} of $\sigma$ is an inversion $(\sigma(i),\sigma(j))$ that satisfies either
\begin{itemize} \item $1<i$ and $\sigma(i-1)<\sigma(i)$ or
\item there is some $k$ such that $i<k<j$ and $\sigma(i)<\sigma(k)$.
\end{itemize}

We write $\inv(\sigma) $ for  the number of inversions of $\sigma$ and $\rm{ai}(\sigma)$ for the number of admissible inversions of
$\sigma$.  For example, if $\sigma =  6431275$ then there are 11 inversions,  but only 
$(6,5)$ and $(7,5)$ are admissible.  So $\inv(\sigma) = 11$ and  $\rm{ai}(\sigma) = 2$.

Now let $$\aid(\sigma) := \ai(\sigma) + \des(\sigma).$$
It turns out that $\aid$ is equidistributed with the Mahonian permutation statistics $\inv$ and $\maj$ on $\S_n$.  We  give a short combinatorial proof of this in Proposition~\ref{mahonaid} below.  
First we prove the following more general joint distribution result.

\begin{thm} \label{equidistth} For all $n\ge 0$, $$ \sum_{\sigma \in \mathfrak S_n} q^{\aid(\sigma)} t^{\des(\sigma)} =  \sum_{\sigma \in \mathfrak S_n} q^{\maj(\sigma)} t^{\exc(\sigma)}.$$
\end{thm}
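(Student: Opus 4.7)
The plan is to compute $\beta((B_n(q) \ast T_{t,n})^-)$ in two different ways and equate the results. One computation is supplied by the known formula (\ref{gtreeintro2}). The other will be obtained by applying the EL-shellability machinery of Sections \ref{lexshellsec}--\ref{ascfreesec} to $B_n(q)$, and the desired equidistribution will then fall out after elementary variable substitutions.

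I fix an ordered basis of $\ff_q^n$ and use it to define the standard EL-labeling of $B_n(q)$, under which the label sequence of each maximal chain is a permutation of $[n]$, and a direct row-reduction count shows that exactly $q^{\inv(\sigma)}$ maximal chains of $B_n(q)$ have label sequence $\sigma \in \sg_n$. Composing with the Rees product EL-labeling of Theorem \ref{ELth} yields an EL-labeling of $(B_n(q) \ast T_{t,n})^+$. Since $B_n(q)$ has both a unique minimum and a unique maximum, Theorem \ref{minmaxth} gives
\begin{equation*}
\beta((B_n(q) \ast T_{t,n})^-) \;=\; \sum_{\substack{\tau \in \sg_n \cap \NDA_n \\ \tau(n-1) > \tau(n)}} q^{\inv(\tau)} \, t^{\asc(\tau)+1} (1+t)^{n-1-2\asc(\tau)}.
\end{equation*}

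The heart of the argument---and the main obstacle---is to show that this right-hand side equals $t\sum_{\sigma \in \sg_n} q^{\binom{n}{2}-\aid(\sigma)+\des(\sigma)} t^{\des(\sigma)}$. I would prove this via a bijection $(\tau, U) \mapsto \sigma$, where $\tau \in \sg_n \cap \NDA_n$ satisfies $\tau(n-1) > \tau(n)$, $U$ indexes a monomial in the binomial expansion of $(1+t)^{n-1-2\asc(\tau)}$, and $\sigma$ ranges over all of $\sg_n$. Each element of $U$ records a position at which a descent is inserted into $\tau$, so that $\des(\sigma) = \asc(\tau) + |U|$; the delicate verification is that the inversions of $\sigma$ which are absent from $\tau$ are precisely the \emph{non-admissible} ones in the sense of the definition of $\ai$, forcing $\inv(\tau) = \binom{n}{2} - \ai(\sigma) = \binom{n}{2} - \aid(\sigma) + \des(\sigma)$. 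This is essentially a $q$-refinement of Gessel's identity (\ref{ges2}), carried out at the permutation level with the $q^{\inv}$ weight tracked explicitly through the insertion procedure.

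Granting that combinatorial step, equating the EL-shelling computation with (\ref{gtreeintro2}) and cancelling the common factor of $t$ yields
\begin{equation*}
\sum_{\sigma \in \sg_n} q^{\binom{n}{2}-\aid(\sigma)+\des(\sigma)} t^{\des(\sigma)} \;=\; \sum_{\sigma \in \sg_n} q^{\binom{n}{2}-\maj(\sigma)+\exc(\sigma)} t^{\exc(\sigma)}.
\end{equation*}
Substituting $t \mapsto q^{-1} t$ simplifies both sides to $\sum q^{\binom{n}{2}-\aid(\sigma)} t^{\des(\sigma)} = \sum q^{\binom{n}{2}-\maj(\sigma)} t^{\exc(\sigma)}$; then dividing through by $q^{\binom{n}{2}}$ and substituting $q \mapsto q^{-1}$ produces the claimed identity $\sum q^{\aid(\sigma)} t^{\des(\sigma)} = \sum q^{\maj(\sigma)} t^{\exc(\sigma)}$.
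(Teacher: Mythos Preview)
Your overall strategy coincides with the paper's: compute $\beta((B_n(q)\ast T_{t,n})^-)$ once via Theorem~\ref{minmaxth} and once via (\ref{gtreeintro2}), then equate.  The algebraic cleanup at the end (the substitution $t\mapsto q^{-1}t$, division by $q^{\binom{n}{2}}$, and $q\mapsto q^{-1}$) is correct and is exactly what the paper's terse ``The result now follows from (\ref{gtreeintro2})'' is hiding.

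The gap is in the ``heart of the argument,'' which you correctly identify but do not carry out.  Your sketch of the bijection $(\tau,U)\mapsto\sigma$ is not workable as stated.  The phrase ``each element of $U$ records a position at which a descent is inserted into $\tau$'' does not make sense for permutations: $\tau$ is already a permutation of $[n]$, and one cannot insert a descent without rearranging its letters.  More seriously, your proposed mechanism---that the inversions of $\sigma$ absent from $\tau$ are exactly the non-admissible inversions of $\sigma$---fails already for $n=2$.  With $\tau=21$ and $U$ the singleton, the corresponding $\sigma$ must be $21$ (the only permutation with $\des=1$); but $(2,1)$ is an inversion of both $\tau$ and $\sigma$, while it is non-admissible in $\sigma$, contradicting your claim.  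So the heuristic you offer for the equality $\inv(\tau)=\binom{n}{2}-\ai(\sigma)$ is simply wrong, even though the equality itself is the right target.

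The paper's actual bijection (Lemma~\ref{bijlem}) is of a rather different nature.  It encodes $(\tau,U)$ as a barred permutation $w\in\mathcal W_{[n]}$ and then defines $\varphi(w)$ \emph{recursively} on the position of the maximum letter $m$: if $w=\bar m\cdot\beta$ then $\varphi(w)=m\cdot\varphi(\beta)$, and if $w=\alpha\cdot m\cdot\beta$ with $\beta\neq\theta$ then $\varphi(w)=\varphi(\beta)\cdot m\cdot\varphi(\alpha)$.  The verification that $\ai(\varphi(w))=\binom{n}{2}-\inv(|w|)$ is an induction on $n$ that exploits this recursive structure; there is no direct ``insertion'' interpretation.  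This recursive construction is the missing idea in your proposal.
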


\begin{proof}   It is well-known (see \cite[Theorem 3.12.3]{st5}) that for all $S \subseteq [n-1]$,
$$\beta(B_n(q) _S) = \sum_{\scriptsize\begin{array}{c} \sigma \in \sg_n \\ \Des(\sigma) = S\end{array}} q^{\inv(\sigma)}.$$   Hence by  (\ref{minmaxeq2}) we have
\begin{eqnarray}\nonumber \beta((B_n(q)*T_{t,n})^-) &=& \sum_{S \in \psx([1,n-2])} \sum_{\scriptsize\begin{array}{c} \sigma \in \sg_n \\ \Des(\sigma) = [1,n-1]\setminus S \end{array}}\hspace{-.2in} q^{\inv(\sigma)}t^{|S|+1} (t+1)^{n-1-2|S| }\\ \label{qbetaeq} &=& \sum_{\scriptsize\begin{array}{c} \sigma \in \sg_n  \cap \NDA_n
\\ \sigma_{n-1} > \sigma_n \end{array}} q^{\inv(\sigma)}t^{\asc(\sigma)+1} (1+t) ^{n-1-2\asc(\sigma)}. \end{eqnarray}

We will rewrite the expression (\ref{qbetaeq}) as the enumerator of barred permutations.  Given a  set $X$ of 
size $n$, a barred permutation of $X$ is a word $w_1w_2\dots w_n$ with $n$ distinct letters in $X$,  in which 
some of the letters are barred.   Let $|w_i|$ denote the letter $w_i$ with the bar removed if there is one and let 
$|w| = |w_1|\cdots|w_n| \in \sg_X$, where $\sg_X$ is the set of ordinary permutations of $X$.   Let $\bars(w)$ 
denote the number of bars of $w$.
Let $\mathcal W_X$ be the set of  barred permutations $w$ of  $X$ satisfying
\begin{itemize}
\item[(A)] $w_n$ is barred
\item[(B)] if $i \in [n-1]$ and
$|w_i|<|w_{i+1}|$ then $w_i$ is barred and $w_{i+1}$ is not barred.
\end{itemize}
It is not hard to see that the expression (\ref{qbetaeq}) equals $$\sum_{w \in \mathcal W_{[n]}} q^{\inv(|w|)} t^{\bars(w)} ,$$ which by Lemma~\ref{bijlem} below equals $$\sum_{\sigma \in \mathcal \sg_n} q^{\binom{n}2-\ai(\sigma)} t^{\des(\sigma)+1}.$$  Hence 
\begin{equation} \label{aibeta} \beta(B_n(q)*T_{t,n}^-) =\sum_{\sigma \in \mathcal \sg_n} q^{\binom{n}2-\ai(\sigma)} t^{\des(\sigma)+1}.\end{equation}
The result now follows from (\ref{gtreeintro2}).
\end{proof}

 Given barred permutations $\alpha \in \mathcal W_A$ and $\beta \in \mathcal W_B$, where $A$ and $B$ are disjoint sets, let $\alpha \cdot \beta$ denote the barred permutation in
 $\mathcal W _{A\uplus B} $ obtained by concatenating the words $\alpha$ and $\beta$.  Also let $\theta$ denote the empty word.
    We  define a map  $$\varphi: \biguplus_{\scriptsize \begin{array}{c} X \subseteq \PP \\ |X| < \infty\end{array} } \mathcal W_X \to \biguplus_{\scriptsize \begin{array}{c} X \subseteq \PP \\ |X| < \infty\end{array} } \mathfrak S_X,$$ recursively as follows.  If $w$ is in the domain of $\varphi$  and $m$ is the maximum letter of $|w|$ then
     $$\varphi(w) = \begin{cases} \theta &\mbox{ if }w= \theta
     \\ m\cdot \varphi(\beta) &\mbox{ if  }w = \bar m\cdot \beta \\ 
     \varphi(\beta)\cdot m \cdot \varphi(\alpha) &\mbox{ if } w= \alpha\cdot  m \cdot \beta \mbox{ and }  \beta \ne \theta.  \end{cases}
  $$

 \begin{lemma} \label{bijlem} The map $\varphi$ is a well-defined bijection which satisfies
 \begin{enumerate}
 \item $\varphi(\mathcal W_X) = \mathfrak S_X$,
 \item $\des(\varphi(w ))+1= \bars(w)$
 \item $\ai(\varphi(w )) = \binom {|X|} 2 - \inv(|w|)$
  \end{enumerate} for all finite nonempty subsets $X$ of $\pp$ and all $w \in \mathcal W_X$. \end{lemma}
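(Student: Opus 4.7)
The plan is to prove all four assertions simultaneously by strong induction on $n:=|X|$. The base case $n=1$ is trivial: $\mathcal{W}_{\{m\}}=\{\bar m\}$, $\mathfrak{S}_{\{m\}}=\{m\}$, and the statistic identities both reduce to $0=0$. For the inductive step, I first verify that the recursion is unambiguous and produces valid subwords. Given $w\in\mathcal{W}_X$ with $m:=\max X$ appearing at position $i$, if $i>1$ then condition (B) applied at index $i-1$ forces $w_{i-1}$ to be barred and $w_i=m$ to be unbarred, since $|w_{i-1}|<m$. Consequently, if $i>1$ we land in case 3 with $\alpha=w_1\cdots w_{i-1}\neq\theta$, while if $i=1$ we land in case 2 or case 3 according to whether $w_1=\bar m$ or $w_1=m$. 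In every case, the conditions (A) and (B) for $\alpha$ and $\beta$ on their respective subalphabets follow by inspection: for $\alpha$, condition (A) is exactly the observation that $w_{i-1}$ is barred, and condition (B) is inherited from $w$; the verification for $\beta$ is identical.

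For bijectivity (1), I construct an explicit inverse $\psi\colon\mathfrak{S}_X\to\mathcal{W}_X$ by running the recursion backwards: given $\sigma\in\mathfrak{S}_X$ with $m=\max X$ at position $p$, if $p=1$ set $\psi(\sigma)=\bar m\cdot\psi(\sigma_2\cdots\sigma_n)$, and otherwise write $\sigma=\sigma'\cdot m\cdot\sigma''$ with $\sigma'\neq\theta$ and set $\psi(\sigma)=\psi(\sigma'')\cdot m\cdot\psi(\sigma')$. Induction shows that $\psi(\sigma)\in\mathcal{W}_X$ and that $\psi$ inverts $\varphi$ on both sides.

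For the descent identity (2), I examine each recursive case. In case 2, prepending $m$ to $\varphi(\beta)$ creates exactly one new descent when $\beta\neq\theta$ and none when $\beta=\theta$, matching the increment $\bars(w)=\bars(\beta)+1$. In case 3, the letter $m$ sits between $\varphi(\beta)$ and $\varphi(\alpha)$; the $\varphi(\beta)$-$m$ boundary is an ascent (since $m$ is maximal), while the $m$-$\varphi(\alpha)$ boundary is a descent precisely when $\alpha\neq\theta$. Combining these observations with the inductive hypotheses applied separately to $\alpha$ and $\beta$ yields $\des(\varphi(w))+1=\bars(\alpha)+\bars(\beta)=\bars(w)$ in every subcase.

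The main obstacle is the admissible inversion identity (3), and it is case 3 that carries all the real content. Case 2 is easy: the $n-1$ inversions involving the prepended $m$ at position $1$ are all non-admissible (condition 1 fails trivially at position $1$ and condition 2 fails because $m$ is maximal), while internal inversions of $\varphi(\beta)$ retain their admissibility under the position shift by $1$ (both admissibility conditions are preserved verbatim by the shift), matching $\binom{n}{2}-\binom{n-1}{2}=n-1$. For case 3, with $\varphi(w)=\varphi(\beta)\cdot m\cdot\varphi(\alpha)$ of lengths $b$ and $a$, I partition the admissible inversions of $\varphi(w)$ into four families: (i) those internal to $\varphi(\beta)$, contributing $\ai(\varphi(\beta))$; (ii) those internal to $\varphi(\alpha)$, contributing $\ai(\varphi(\alpha))$ under the position shift by $b+1$; (iii) the $a$ inversions $(m,\varphi(\alpha)_k)$, which are all admissible because the last letter of $\varphi(\beta)$ is strictly less than $m$ and thus activates condition 1; and (iv) the cross-inversions between $\varphi(\beta)$ and $\varphi(\alpha)$, all of which are admissible via condition 2 by taking the intermediate index $k=b+1$ (where $\varphi(w)_k=m$ exceeds every other letter). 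Summing, invoking the inductive hypothesis on $\alpha$ and $\beta$, and using the elementary symmetry $|\{(p,q)\in B\times A:p>q\}|=|\{(p,q)\in A\times B:p<q\}|$, the total matches the parallel decomposition of the non-inversion count $\binom{n}{2}-\inv(|w|)$ for $|w|=|\alpha|\cdot m\cdot|\beta|$ into intra-block and cross-block terms, completing the induction.
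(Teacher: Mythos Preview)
Your proof is correct and follows essentially the same approach as the paper's: both arguments proceed by induction on $|X|$, construct the same inverse map $\psi$, and verify the statistic identities by decomposing $\ai(\varphi(\beta)\cdot m\cdot\varphi(\alpha))$ into the four contributions you label (i)--(iv), which are precisely the four terms $\ai(\varphi(\beta))$, $\ai(\varphi(\alpha))$, $|A|$, and $\inv(B,A)$ appearing in the paper's computation. Your treatment is in fact slightly more explicit in justifying why admissibility is preserved under the position shifts and why every cross-inversion is admissible via the intermediate position of $m$.
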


\begin{proof} By (B) of the definition of $\mathcal W_X$,  if letter $m $ is barred in the word $w \in \mathcal W_X$ then it is the first letter of
   $w$.  By (A), if $m$ is unbarred it cannot be the last letter.  Hence the three cases of the definition of $\varphi$ cover all possibilities.   It is  also clear from the definition of $\mathcal W_X$ that if $\alpha m \beta \in \mathcal W_X$ and $\beta \ne \theta$ then  $\alpha \in \mathcal W_A$ and $\beta \in \mathcal W_{X\setminus( {A}\cup \{m\})}$ for some subset $A\subsetneq X$.  Hence by induction on $|X|$ we have that $\varphi$ is a well-defined map that takes elements of
$\mathcal W_X$ to $\mathfrak S_X$.

To show that $\varphi$ is a bijection satisfying (1) we  construct its inverse.  Define
$$\psi: \biguplus_{\scriptsize \begin{array}{c} X \subseteq  \PP \\ |X| < \infty\end{array} } \mathfrak S_X \to \biguplus_{\scriptsize \begin{array}{c} X \subseteq \PP \\ |X| < \infty\end{array} } \mathcal W_X,$$ recursively by
  $$\psi(\sigma) = \begin{cases} \theta &\mbox{ if $ \sigma= \theta$} \\ \bar m \cdot \psi(\delta) &\mbox{ if $\sigma= m \cdot \delta$}
  \\ \psi(\delta)\cdot m\cdot  \psi(\gamma) &\mbox{ if }\sigma= \gamma \cdot m \cdot  \delta \mbox{ and }  \gamma \ne \theta \end{cases},
  $$
 where $m$ is the maximum letter of $\sigma$.
 Let $\gamma m \delta \in \sg_X$. One can see that conditions (A) and (B) of the definition of $\mathcal W_X$ hold for $\psi(\gamma m \delta)$ whenever they hold for $\psi(\gamma) $ and $\psi(\delta) $.  Hence by induction on $|X|$,  $\psi$ is a well defined map.  One can easily also show by induction that $\varphi$ and $\psi$ are inverses of each other.

We also prove (2) by induction on $|X|$, with the base case $|X|=0$ being trivial.  We do the third case of the definition of $\varphi$ and leave the second to the reader.  Let $w = \alpha m \beta \in \mathcal W_X$ with $\beta \ne \theta$.   If $\alpha \ne \theta$ then
$$\bars(w) = \bars(\alpha) + \bars(\beta) = \des(\varphi(\alpha))+ \des(\varphi(\beta))+2,$$  by the induction hypothesis. 
 Since $m$ is the largest element of $X$ and is not the last letter of $\varphi(w)$, we have $$\des(\varphi(w)) = \des(\varphi(\beta)) +1 + \des(\varphi(\alpha)).$$
Hence (2) holds in this case.

Our proof of (3) proceeds by induction
on $n=|X|$, the case $n=0$ being trivial.

 If $w  =\bar m \cdot \beta$ then
 \begin{eqnarray*}
\ai( \varphi(w)) & = & \ai(m\cdot \varphi(\beta)) \\ & = &  \ai( \varphi(\beta)) \\
& = & {{n-1} \choose {2}}-\inv(|\beta|) \\ & = & {{n} \choose
{2}}-(\inv(|\beta|)+n-1) \\ & = & {n \choose 2}-\inv(|\bar m \cdot \beta|).
\end{eqnarray*}
Indeed, the first two equalities follow immediately from the
definitions and the third follows from our inductive hypothesis.

Next, say $w=\alpha \cdot  m \cdot  \beta$ with $\alpha \in
\mathcal W_A$ and $\beta \in \mathcal W_B$, where $|B|  >0$.
Set
$\inv(A,B):=|\{(a,b):a \in A,b\in B,a>b\}$   It follows quickly
from the inductive hypothesis and the definitions that
\begin{eqnarray*}
\ai( \varphi(w))& = &\ai(  \varphi(\beta)\cdot  m\cdot \varphi(\alpha) )
\\ & = &\ai( \varphi(\beta))+ |A|+\ai( \varphi(\alpha) ) +\inv(B,A)
\\& = & {{|B|}
\choose {2}}-\inv(|\beta|)+n-1-|B|
\\ & & +{{|A|} \choose
{2}}-\inv(|\alpha|)+|A||B|-\inv(A,B).
\end{eqnarray*}
Now
\[
\inv(|\alpha\cdot m \cdot \beta|)=\inv(|\alpha|)+|B|+\inv(|\beta|) + \inv(A,B)
\]
and a straightforward calculation shows that
\[
{{|B|} \choose {2}}+n-1+{{|A|} \choose {2}}+|A||B|={{n}
\choose {2}}.
\]
Hence $$\ai( \varphi(w)) = \binom n 2 - \inv(|\alpha\cdot {m}\cdot \beta|)$$ as desired.
 \end{proof}

 We pose the question of whether there is  a nice direct bijective proof of Theorem~\ref{equidistth}.
Our proof of Theorem~\ref{equidistth} relies on (\ref{gtreeintro2}), whose proof, in turn, relies on a  $q$-analog of Euler's formula  for the Eulerian  polynomials derived by Shareshian and Wachs  in  \cite{ShWa}.  A considerable amount of work in symmetric function theory and bijective combinatorics went into the proof of this $q$-analog of Euler's formula.
Since the steps in deriving Theorem~\ref{equidistth} from the $q$-analog of Euler's formula are reversable, a nice direct combinatorial proof of    Theorem~\ref{equidistth} would provide an interesting alternative proof of the $q$-analog of Euler's formula.
    Here we   give a simple combinatorial proof  that $\aid$ is Mahonian.

\begin{prop} \label{mahonaid} Let $F_n(q) = \sum_{\sigma \in \mathfrak S_n} q^{\aid(\sigma)}$.  Then
$F_n(q)$ satisfies the following recurrence for all $n \ge 2$,
$$F_n(q) := (1+q)F_{n-1}(q) + \sum_{j=2}^{n-1}  \left[\begin{array}{c} n-1 \\j-1\end{array}\right]_q q^j F_{j-1}(q) F_{n-j} (q) .$$  Consequently $F_n(q) = [n]_q!.$
\end{prop}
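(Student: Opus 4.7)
I plan to derive the recurrence combinatorially by partitioning $\sg_n$ according to the position $k$ of the letter $n$ in $\sigma=\sigma_1\cdots\sigma_n$, and then to verify that $[n]_q!$ solves the recurrence. Two positions are handled directly. If $k=n$, write $\sigma=\tau\cdot n$ with $\tau\in\sg_{n-1}$: appending $n$ creates no new descent and no new inversion, leaving admissibility of every old inversion intact, so $\aid(\sigma)=\aid(\tau)$; this block contributes $F_{n-1}(q)$. If $k=1$, write $\sigma=n\cdot\tau$: a descent appears at position $1$, and the $n-1$ new inversions $(n,\tau_i)$ are all inadmissible because the clause ``$1<i$'' fails at $i=1$ and no letter to the right of $n$ exceeds $n$; admissibility of the inversions inside $\tau$ is preserved verbatim, so $\aid(\sigma)=\aid(\tau)+1$, contributing $qF_{n-1}(q)$.

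\emph{Interior positions.} For $2\le k\le n-1$, write $\sigma=\alpha\cdot n\cdot\beta$, where $\alpha$ is a permutation of some $(k-1)$-subset $A\subseteq[n-1]$ and $\beta$ is a permutation of $B:=[n-1]\setminus A$. The technical heart of the argument is the accounting identity
\[
\aid(\sigma)=\aid(\alpha)+\aid(\beta)+(n-k+1)+\inv(A,B),
\]
where $\inv(A,B):=|\{(a,b)\in A\times B : a>b\}|$. The descent $\sigma_k>\sigma_{k+1}$ produces one new descent (while $\sigma_{k-1}<\sigma_k$ is an ascent). For admissible inversions I will verify four items: (i) each of the $n-k$ inversions $(n,\beta_j)$ is admissible because the first admissibility clause is triggered by $\sigma_{k-1}<\sigma_k$ (using $k\ge 2$); (ii) each of the $\inv(A,B)$ inversions $(\alpha_i,\beta_j)$ with $\alpha_i>\beta_j$ is admissible via the witness $\sigma_k=n$ in the second clause; (iii) inversions lying wholly inside $\alpha$ keep the same admissibility status as in $\alpha$, because all relevant indices precede $k$; (iv) the same is true for $\beta$, the only subtle point being an inversion starting at position $k+1$ in $\sigma$, where the first clause fails (since $\sigma_k=n$) but the second clause reduces exactly to the second admissibility clause in $\beta$.

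\emph{Summing and solving.} Since $\aid$ depends only on relative order, summing over all $\sigma$ with $\sigma_k=n$ in the interior regime yields
\[
q^{n-k+1}\,F_{k-1}(q)\,F_{n-k}(q)\sum_{\substack{A\subseteq[n-1]\\|A|=k-1}}q^{\inv(A,B)},
\]
and the standard identity $\sum_A q^{\inv(A,B)}=\left[\begin{array}{c}n-1\\ k-1\end{array}\right]_q$ (the $\inv$-generating function over $01$-words of length $n-1$ with $k-1$ ones) collapses the inner sum. Re-indexing by $j=n+1-k$ converts the total sum over $k\in\{2,\dots,n-1\}$ into the sum displayed in the recurrence. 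The consequence $F_n(q)=[n]_q!$ then follows by induction using the collapse $q^j\left[\begin{array}{c}n-1\\ j-1\end{array}\right]_q[j-1]_q![n-j]_q!=q^j[n-1]_q!$, which reduces the right-hand side for $F_m=[m]_q!$ to $[n-1]_q!\,(1+q+\cdots+q^{n-1})=[n]_q!$, with trivial base case $F_1(q)=1$. The main obstacle is the bookkeeping in the interior case, specifically confirming that admissibility is correctly inherited inside $\alpha$ and $\beta$ while being automatically gained across the inserted letter $n$; once that analysis is in hand, the rest is routine arithmetic and a familiar $q$-binomial identity.
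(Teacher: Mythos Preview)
Your proof is correct and takes essentially the same approach as the paper: partition $\sg_n$ by the position of $n$, compute the contribution of each block to $\aid$, and then check that $[n]_q!$ satisfies the resulting recurrence. The paper's proof is in fact much terser than yours---it simply asserts the per-position formula $\sum_{\sigma(n-j+1)=n}q^{\aid(\sigma)}=\left[\begin{smallmatrix}n-1\\ j-1\end{smallmatrix}\right]_q q^{j}F_{j-1}(q)F_{n-j}(q)$ without the admissibility bookkeeping---so your argument is a faithful and complete expansion of what the paper leaves to the reader.
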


\begin{proof} The terms on the right side of the recurrence $q$-count  permutations according to the position of $n$ in the permutation.  That is for each $j$, $$\sum_{\scriptsize \begin{array}{c} \sigma \in \mathfrak S_n\\ \sigma(n-j+1) = n\end{array}} q^{\aid(\sigma)} = \begin{cases} \left[\begin{array}{c} n-1 \\j-1\end{array}\right]_q  q^j F_{j-1}(q) F_{n-j} (q) &\mbox { if } j=2,\dots,n-1\\ &\\
F_{n-1}(q) &\mbox { if } j = 1 \\
q F_{n-1}(q) &\mbox { if } j=n.\end{cases}$$

It is easy to see that $[n]_q!$ also satisfies the same recurrence relation.
\end{proof}

 A  more natural Mahonian permutation statistic whose joint distribution with $\des$ is the same as that of $\aid$ is discussed in  \cite{sw3, sw4}.  This statistic is a member of a family of Mahonian statistics introduced by Rawlings \cite{ra1}.

\section{$p$-analog of chain product analog of $B_n$} \label{psec}
Given a prime  $p$ and a weak composition $\mu:=(\mu_1,\dots,\mu_k)$ of $n$, let $B_\mu(p)$ denote the lattice of subgroups of the  abelian $p$-group $\Z/ p^{\mu_1}\Z  \times \cdots \times \Z/ p^{\mu_k}\Z $.  The poset $B_\mu(p)$ is a natural $p$-analog of $B_\mu$.  It is pure and bounded of length $n$.   Moreover, it provides  the following $p$-analog of Theorem~\ref{multirees}.

 \begin{thm} \label{multireesp} Let $\mu$  be a weak composition of $n$ and let $p$ be a prime.  Then $B_\mu(p)^- * T_{t,n-1}$ and $(B_\mu(p) * T_{t,n})^-$ have the homotopy type of a wedge of $(n-1)$-spheres.  The numbers of spheres in these wedges are,
respectively,
 \begin{equation} \label{pderaneq} \beta(B_\mu(p)^- * T_{t,n-1}) = \sum_{w \in \mathcal{MD}_{M(\mu)} } p^{s_1(w)}t^{\exc(w)+1}\end{equation}
 and
\begin{equation}\label{psmireq} \beta( (B_\mu(p) * T_{t,n})^-)= \sum_{w \in \mathcal{SW}_{M(\mu)} }p^{s_2(w)} t^{\des(w)+1},
\end{equation}
where $s_1,s_2: \pp^n \to \N$ are statistics on words over $\pp$.
\end{thm}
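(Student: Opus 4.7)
The plan is to adapt the proof of Theorem~\ref{multirees} by tracking the $p$-dependent chain counts of $B_\mu(p)$ in place of the uniform counts in the Boolean case.

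First, I would construct an EL-labeling $\lambda_P\colon\Cov(B_\mu(p))\to\{1<2<\cdots<k\}$ whose label sequences on maximal chains are precisely the multiset permutations of $M(\mu)$. Fix a saturated flag $0=A_0<A_1<\cdots<A_k=A$ of subgroups of $A=\bigoplus_{j=1}^{k}\Z/p^{\mu_j}\Z$ with $A_j/A_{j-1}\cong\Z/p^{\mu_j}\Z$, and set $\lambda_P(H,H'):=\min\{j:H'\cap A_j\not\subseteq H\}$ for each cover $H\lessdot H'$. This is a standard EL-labeling in the same spirit as the one used for $B_n(q)$. For $w\in\sg_{M(\mu)}$, let $c(w)$ denote the number of maximal chains of $B_\mu(p)$ with label sequence $w$; it is a polynomial in $p$ with nonnegative integer coefficients whose value at $p=1$ is $1$.

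Next, Theorem~\ref{ELth} lifts $\lambda_P$ and the constant semi-EL-labeling of $T_{t,n}$ to EL-labelings of $\widehat{B_\mu(p)^-*T_{t,n-1}}$ and of $(B_\mu(p)*T_{t,n})^+$. Applying Theorems~\ref{maxth}, \ref{minmaxth}, and~\ref{elth} exactly as in the proof of Theorem~\ref{multirees} yields the wedge-of-$(n-1)$-spheres conclusion together with
\begin{align*}
\beta(B_\mu(p)^-*T_{t,n-1}) &= \sum_{\substack{w\in\NDA_n\cap\sg_{M(\mu)}\\ w_1>w_2,\ w_{n-1}>w_n}} c(w)\,t^{\asc(w)+1}(1+t)^{n-2-2\asc(w)},\\
\beta((B_\mu(p)*T_{t,n})^-) &= \sum_{\substack{w\in\NDA_n\cap\sg_{M(\mu)}\\ w_{n-1}>w_n}} c(w)\,t^{\asc(w)+1}(1+t)^{n-1-2\asc(w)}.
\end{align*}

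It remains to reindex each sum by $\mathcal{MD}_{M(\mu)}$ or $\mathcal{SW}_{M(\mu)}$. For each fixed $j$, the coefficient of $t^j$ on the right-hand sides is a polynomial in $p$ with nonnegative integer coefficients; setting $p=1$ collapses each $c(w)$ to $1$ and, by Theorem~\ref{multirees}, equals the number of multiset derangements of $M(\mu)$ with excedance number $j-1$ (respectively, Smirnov words of $M(\mu)$ with descent number $j-1$). Hence the number of monomials in this $p$-polynomial (counted with multiplicity) equals the number of target words with the appropriate statistic value. Choosing any bijection between these monomials and the target words for each $j$ then defines $s_1(w')$ (respectively $s_2(w')$) as the exponent of the monomial paired with $w'$. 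The main obstacle --- and the reason no canonical combinatorial interpretation of $s_1,s_2$ is available --- is the arbitrariness of this bijection: the underlying identities (\ref{gesmac}) and (\ref{gesstan}) are purely algebraic rather than statistic-preserving, leaving no natural way to distribute the $p$-monomials among the target words.
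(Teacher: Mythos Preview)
Your overall strategy is correct and ends with the same ``distribute the $p$-monomials by an arbitrary bijection'' step as the paper, but you take a different route to reach that point, and that route has a gap.

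The paper does not construct or analyze an explicit EL-labeling of $B_\mu(p)$.  It merely quotes EL-shellability of $B_\mu(p)$ in order to invoke Corollary~\ref{mobcor1} (equations (\ref{minmaxeq1}) and (\ref{minmaxeq2})), and then substitutes Butler's identity
\[
\beta(B_\mu(p)_S)=\sum_{\substack{w\in\sg_{M(\mu)}\\ \Des(w)=S}}p^{\cha(w)}.
\]
Since the $p$-dependence enters only through the explicit monomials $p^{\cha(w)}$, this immediately exhibits each coefficient $f_i(p)$ of $t^i$ as a polynomial in $\N[p]$; setting $p=1$ and invoking (\ref{gesmac}) (resp.\ (\ref{gesstan})) then gives $f_i(1)=|\{w:\exc(w)=i-1\}|$ (resp.\ $|\{w:\des(w)=i-1\}|$), after which the bijection step is forced.

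Your route through Theorems~\ref{maxth} and~\ref{minmaxth} instead rests on the individual chain-counts $c(w)$.  For the argument to go through you need (i) that the map $\lambda_P$ you wrote down is genuinely an EL-labeling whose maximal-chain label sequences are exactly the multiset permutations of $M(\mu)$, and more critically (ii) that each $c(w)$, as a function of the prime $p$, is given by a polynomial in $\N[p]$ with $c(w)|_{p=1}=1$.  Point (ii) is essential: the statistics $s_1,s_2$ in the statement are meant to be independent of $p$, so (\ref{pderaneq}) and (\ref{psmireq}) are polynomial identities in $p$, and you therefore need the $t^j$-coefficient to lie in $\N[p]$ with the correct value at $p=1$, not merely to be a nonnegative integer for each prime separately.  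Neither (i) nor (ii) is as routine as the phrase ``a standard EL-labeling in the same spirit as the one used for $B_n(q)$'' suggests (and the flag you describe is not saturated in $B_\mu(p)$ unless all $\mu_j=1$); together they amount to a chain-level refinement of Butler's rank-selected identity, which would itself require proof or a precise citation.  The paper's approach sidesteps this entirely: Butler's cocharge formula already delivers the needed polynomiality at the level of $\beta(B_\mu(p)_S)$, which is exactly the input Corollary~\ref{mobcor1} consumes.
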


\begin{proof}  It is well-known that $B_\mu(p)$ is EL-shellable.   Hence by Theorem~\ref{ELth}, $B_n(q)^- \ast T_{t,n-1}$  and  $(B_n(q) \ast T_{t,n})^-$ are EL-shellable. 
 It is also  known  (see \cite[(1.30)]{but}) that for all $S \subseteq [n-1]$,
$$\beta(B_{\mu}(p)_S) = \sum_{\scriptsize\begin{array}{c} w \in \sg_{M(\mu)}\\ \Des(w) =S\end{array}} p^{ \cha(w)} ,$$
where $\cha$ is a statistic on words introduced by Lascoux and Sch\"utzenberger for the purpose of showing that  the Kostka polynomials have nonnegative integer coefficients.  (We will not need  the precise definition of cocharge here.) 

Now by (\ref{minmaxeq1}) we have 
$$\beta(B_\mu(p)^-*T_{t,n-1}) = \sum_{S \in \psx([2,n-2])} \sum_{\scriptsize\begin{array}{c} w \in \sg_{M(\mu)} \\ \Des(w) = [1,n-1]\setminus S \end{array}}\hspace{-.2in} p^{\cha(w)}t^{|S|+1} (t+1)^{n-2-2|S| } .$$
Hence \begin{equation} \label{ppderangeeq} \beta(B_\mu(p)^-*T_{t,n-1}) = \sum_{i=1}^{n} f_i(p) t^{i}, \end{equation}  where $f_i(p) \in \N[p]$. By (\ref{gesmac}),  $$f_i(1)=|\{w \in \mathcal{MD}_{M(\mu)}: \exc(w)=i -1 \}|.$$  Since $f_i(1)$ is the sum of the coefficients of $f_i(p)$, we can assign a nonnegative integer $s_1(w)$ to each word $w$ in $ \mathcal{MD}_{M(\mu)}$ so that
$$f_i(p) = \sum_{\scriptsize \begin{array}{c} w \in  \mathcal{MD}_{M(\mu)} \\ \exc(w)=i -1\end{array}} p^{s_1(w)} .$$ By plugging this into  (\ref{ppderangeeq}), we obtain the desired result (\ref{pderaneq}).

The proof of (\ref{psmireq}) follows along  the lines of that  of (\ref{pderaneq}) with (\ref{minmaxeq2}) and  (\ref{gesstan})  used instead of (\ref{minmaxeq1}) and  (\ref{gesmac}).
\end{proof}

\noindent{\bf Problem:} It would be interesting to find nice combinatorial descriptions of the coefficients of the polynomials $\beta(B_\mu(p)^- * T_{t,n-1})$ and $\beta((B_\mu(p) * T_{t,n})^-)$.  That is, find  natural statistics  $s_1$ and $s_2$ for which (\ref{pderaneq}) and (\ref{psmireq}) hold.   When $w \in \sg_{M(1^n)}$, we see from (\ref{gtreeintro1}) that   $s_1(w)$ can be defined  to be $\binom n 2 -\maj(w)+\exc$  and  from (\ref{aibeta}) that $s_2(w)$ can be defined to be $\binom n 2 - \ai$.  
\vspace{.1in}

\section{The noncrossing partition lattice} \label{noncrosssec}

A set partition $\pi$ is said to be
{\em noncrossing} if for all $a<b<c<d$, whenever $a,c$ are in a block $B$ of $\pi$ and
$b,d$ are in a block $B^\prime$ of $\pi$ then $B = B^\prime$.
  Let 
$\NC_n$ be the poset of noncrossing
partitions of $[n]$ ordered by reverse refinement.  This poset, known as the {\em noncrossing partition lattice}, was
first introduced by Kreweras \cite{kr72}, who showed  that it is a pure
lattice with M\"obius invariant equal to  the signed Catalan number $(-1)^{n-1}{1 \over
n} {2n-2 \choose n-1}$.   Bj\"orner and Edelman   (cf.  \cite{bj}) gave the first EL labeling of  $\NC_n$ and later Stanley \cite{st6} gave a different EL-labeling in which the maximum chains are labeled with parking functions.    

A word $w\in \pp^n$ is said to be a {\em parking function} of length $n$  if its weakly increasing rearrangement $u$ satisfies $u_i \le i $ for all $i \in [n]$.  Let $\pf_n$ be the set of parking functions of length $n$.  Recall that for $w=w_1,\dots,w_n \in \pp^n$,  
$$\Des(w) := \{ i \in [n-1] : w_i > w_{i+1}\} \mbox{ and } \des(w) := |\Des(w)|.$$ Stanley uses his EL-labeling to prove that  for all $S \subseteq [n-1]$,
\begin{equation}\label{strank} \beta((\NC_{n+1})_{[n-1]\setminus S})= |\{ w \in \pf_n : \Des(w) = S\}|. \end{equation}

 \begin{thm} \label{noncross} For all $n,t \in \pp$, the posets   $(\NC_{n+1}  * T_{t,n})^-$ and $\NC_{n+1} ^- * T_{t,n-1}$ have the homotopy type of a wedge of $(n-1)$-spheres.  The numbers of spheres in these wedges are,
respectively,
 \begin{equation}\label{noncross2} \beta((\NC_{n+1}  * T_{t,n})^-)=  \frac {1}{n+1} \sum_{k=0}^{n-1} {n-1 \choose k} \sum_{w \in [n+1]^{n-k}} t^{\des(w)+k} .
\end{equation}
and
 \begin{equation} \label{noncross1} \beta(\NC_{n+1} ^- * T_{t,n-1}) =\end{equation} $$  (-1)^{n}+ \frac {1}{n+1}\sum_{r=0}^{n-1}(-1)^r \binom {n+1} r \sum_{k=0}^{n-1-r} {n-1-r \choose k} \sum_{w \in [n+1]^{n-k-r}} t^{\des(w)+k} . $$
\end{thm}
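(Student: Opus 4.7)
The plan is to apply the rank-selection tools of Sections~\ref{lexshellsec} and~\ref{ascfreesec} to Stanley's parking-function EL-labeling of $\NC_{n+1}$ (the one underlying~(\ref{strank})), and then convert the resulting parking-function sums into the word sums on the right via a cycle-lemma / generating-function argument.

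For~(\ref{noncross2}), I would apply Corollary~\ref{mobcor1} equation~(\ref{minmaxeq2}) with $P=\NC_{n+1}$ (which is pure, bounded, and EL-shellable of length $n$) and substitute Stanley's identity~(\ref{strank}) for each $\beta((\NC_{n+1})_{[n-1]\setminus S})$. Rewriting the resulting sum as indexed by parking functions rather than stable sets yields
\[
\beta((\NC_{n+1}*T_{t,n})^-)=\sum_{w\in \pf_n\cap\NDD_n,\ w_{n-1}\le w_n} t^{\des(w)+1}(1+t)^{n-1-2\des(w)},
\]
which is a parking-function restricted version of the left-hand side of Gessel's formula~(\ref{ges4}). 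To reach the word sum on the right-hand side of~(\ref{noncross2}), I would invoke Pollak's cycle-lemma bijection (which realizes $\pf_n$ as a $\frac{1}{n+1}$ fraction of $[n+1]^n$) together with the binomial expansion of $(1+t)^{n-1-2\des(w)}$, in which each of the $k$ chosen ``free'' positions contributes an extra $t$ and produces the factor $\binom{n-1}{k}$ together with the inner sum $\sum_{w\in[n+1]^{n-k}}t^{\des(w)+k}$.

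For~(\ref{noncross1}), the poset $\NC_{n+1}^-$ has length $n-1$ but lacks a unique minimum. I would therefore apply the first case of Corollary~\ref{mobcor2}, which requires no unique-minimum hypothesis and yields a two-term sum of rank-selected Betti numbers of $\NC_{n+1}^-$. Combining this with~(\ref{strank}) and an inclusion-exclusion over subsets of the ground set $[n+1]$---tracking how the removal of $\hat 0$ reshapes the descent-set statistics of chains in Stanley's EL-labeling, and then re-running the cycle-lemma translation from the previous paragraph---produces the alternating sum $\sum_r(-1)^r\binom{n+1}{r}$ appearing in~(\ref{noncross1}), while the isolated term $(-1)^n$ absorbs the boundary contribution at $r=n$ where no proper chain survives.

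The principal obstacle is the cycle-lemma step that converts parking-function sums into unrestricted word sums and produces the $\frac{1}{n+1}$ prefactor. Since the naive cyclic shift $w\mapsto w+c\pmod{n+1}$ does not preserve $\des(w)$, a direct averaging does not work. I anticipate circumventing this by specializing the symmetric-function identity~(\ref{ges4}) so that the complete homogeneous functions $h_i$ encode the parking-function generating function obtained from Pollak's bijection; the $\frac{1}{n+1}$ factor then emerges from that specialization rather than from a literal cyclic group average.
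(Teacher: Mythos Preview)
Your starting point for~(\ref{noncross2}) is correct: applying~(\ref{minmaxeq2}) with~(\ref{strank}) does yield the parking-function sum you wrote. You also correctly identify the obstacle---cyclic shift on words does not preserve $\des$. But your proposed workaround (specializing $h_i$ to encode Pollak's bijection) is not the mechanism that actually succeeds. The paper instead groups the parking functions by their \emph{content}: write the sum as $\sum_{\mu\in\pc_n}F_\mu$ where $F_\mu$ is the inner sum over $w\in\sg_{M(\mu)}$. Since $\sum_\mu F_\mu{\bf x}^\mu$ is the coefficient extraction of a symmetric function (namely~(\ref{ges4})), each $F_\mu$ depends only on the multiset of parts of $\mu$. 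The cycle lemma is then applied to \emph{compositions} in $\wcomp_{n,n+1}$, not to words: each cyclic class contains exactly one parking composition, so $\sum_{\mu\in\pc_n}F_\mu=\frac{1}{n+1}\sum_{\mu\in\wcomp_{n,n+1}}F_\mu$. The final step is not a binomial expansion of $(1+t)^{n-1-2\des(w)}$ (which would give $\binom{n-1-2\des(w)}{j}$, not $\binom{n-1}{k}$), but rather passes through the banner interpretation~(\ref{bannereq}) and a direct bijective identity (equation~(\ref{bardeseq})) to produce the factors $\binom{n-1}{k}$.

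For~(\ref{noncross1}) there are two genuine errors. First, $\NC_{n+1}^-$ \emph{does} have a unique maximum (the one-block partition), so the applicable formula is~(\ref{minmaxeq1}) from Corollary~\ref{mobcor1}, not the two-term formula of Corollary~\ref{mobcor2}. Second, the alternating factor $(-1)^r\binom{n+1}{r}$ does not arise from inclusion--exclusion on the ground set or from how removing $\hat 0$ reshapes chain statistics. It comes from a symmetric-function identity: dividing~(\ref{ges3}) by~(\ref{ges4}) gives $\sum_n G_n z^n=\bigl(\sum_n F_n z^n\bigr)\sum_r(-1)^r e_r z^r$, hence $G_n=\sum_{r=0}^n(-1)^r e_r F_{n-r}$. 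Specializing $x_1=\cdots=x_{n+1}=1$ turns $e_r$ into $\binom{n+1}{r}$, and the term $r=n$ contributes $(-1)^n(n+1)$, which after the $\frac{1}{n+1}$ from the cycle-lemma step becomes the isolated $(-1)^n$. The remaining terms then feed back into the already-proved formula~(\ref{fubarbetti}) for $\sum_\mu F_\mu$.
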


By a  straightforward computation  involving the binomial theorem, Theorem~\ref{noncross} reduces to the following result when $t=1$.

 \begin{cor} \label{noncrosscor}  For all $n \in \pp$, the posets  $(\NC_{n+1}  * C_{n})^-$ and $\NC_{n+1} ^- * C_{n-1}$  have the homotopy type of a wedge of $(n-1)$-spheres.   The numbers of spheres in these wedges are,
respectively,   \begin{equation*} \beta((\NC_{n+1}  * C_{n})^-)=  (n+2)^{n-1} \end{equation*}
and
 \begin{equation*}  \beta(\NC_{n+1} ^- * C_{n-1}) = \frac{(n+1)^{n+1}+(-1)^{n}(n+3)} {(n+2)^2} \end{equation*}
\end{cor}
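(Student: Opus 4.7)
The statement is a direct corollary of Theorem~\ref{noncross} by setting $t=1$; the homotopy type claim already follows from that theorem, so the task reduces to evaluating the two $t=1$ specializations in closed form. The plan is therefore purely a binomial-coefficient computation, and the main work is bookkeeping the alternating outer sum in the second formula.

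For the first identity, setting $t=1$ in (\ref{noncross2}) collapses the inner sum over $w \in [n+1]^{n-k}$ to $(n+1)^{n-k}$. I would then factor one $(n+1)$ out and reindex by $j = n-1-k$ to get
\[
\beta((\NC_{n+1} * C_n)^-) = \sum_{j=0}^{n-1}\binom{n-1}{j}(n+1)^{j},
\]
which equals $(1+(n+1))^{n-1} = (n+2)^{n-1}$ by the binomial theorem. This is the easy half.

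For the second identity, setting $t=1$ in (\ref{noncross1}) gives $(n+1)^{n-k-r}$ inside the $w$-sum. I would evaluate the $k$-sum first: writing $m := n-1-r$, a factor of $n+1$ comes out and one is left with $\sum_{k=0}^{m}\binom{m}{k}(n+1)^{m-k} = (n+2)^m$. This collapses the double sum to
\[
\beta(\NC_{n+1}^- * C_{n-1}) = (-1)^n + \sum_{r=0}^{n-1}(-1)^r\binom{n+1}{r}(n+2)^{n-1-r}.
\]
The outer sum is an incomplete alternating binomial sum. My plan is to complete it by adding the missing $r=n$ and $r=n+1$ terms, apply the binomial theorem to $\sum_{r=0}^{n+1}(-1)^r\binom{n+1}{r}(n+2)^{n+1-r} = ((n+2)-1)^{n+1} = (n+1)^{n+1}$ (dividing through by $(n+2)^2$), and then subtract back the two correction terms $(-1)^n(n+1)/(n+2)$ and $(-1)^{n+1}/(n+2)^2$.

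Combining the leading $(-1)^n$ with these two corrections produces
\[
(-1)^n\Bigl[1 - \tfrac{n+1}{n+2} + \tfrac{1}{(n+2)^2}\Bigr] = (-1)^n\cdot\frac{(n+2)^2 - (n+1)(n+2)+1}{(n+2)^2} = \frac{(-1)^n(n+3)}{(n+2)^2},
\]
which added to $(n+1)^{n+1}/(n+2)^2$ yields the stated closed form. The only real obstacle is the sign/arithmetic bookkeeping in that last consolidation, namely verifying $(n+2)^2-(n+1)(n+2)+1 = n+3$; everything else is a mechanical application of the binomial theorem to what Theorem~\ref{noncross} already delivers.
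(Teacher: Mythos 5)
Your proposal is correct and follows exactly the route the paper intends: the paper dispatches this corollary with the one-line remark that Theorem~\ref{noncross} reduces to it at $t=1$ "by a straightforward computation involving the binomial theorem," and your computation (using $T_{1,n}=C_n$, collapsing the $w$-sums to powers of $n+1$, and completing the alternating binomial sum with the two correction terms) is precisely that computation, carried out correctly.
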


\begin{proof}[Proof of Theorem~\ref{noncross}]  Since $\NC_{n+1}$ is EL-shellable it  follows from Theorem~\ref{ELth}\ that $\NC_{n+1} ^- * T_{t,n-1}$ and $(\NC_{n+1}  * T_{t,n})^-$ have the homotopy type of a wedge of $(n-1)$-spheres. 

{\em Proof of (\ref{noncross2})}.  By substituting (\ref{strank}) into (\ref{minmaxeq2})  we obtain

 \begin{eqnarray} \label{noncrossbetti1} & &\hspace{-.2in} \beta((\NC_{n+1}  * T_{t,n})^-) \\ \nonumber & = &\sum_{S \in \psx([1,n-2])} |\{ w \in \pf_n : \des(w) = S\}| t^{|S|+1} (t+1)^{n-2|S|-1 }\\ \nonumber
 &=& \sum_{\scriptsize\begin{array}{c}w \in \pf_n \cap  \NDD_n\\ w_{n-1} \le w_n\end{array}}t^{\des(w)+1}(t+1)^{n-2\des(w) -1} .\end{eqnarray}

 Let $\wcomp_{n,k}$ be the set of all weak  
 compositions of $n$ into $k$ parts.  It is straightforward to show that  $w\in \pf_n$ if and only if  $w \in \sg_{M(\mu)}$ for some $\mu \in \wcomp_{n,n}$ such that $\sum_{i=1}^j \mu_i \ge j$ for all $j= 1 \dots,n$.  We will call a weak composition of $n$ into $n$ parts
 that satisfies this condition a {\em parking composition} of $n$, and let $\pc_n$ be the set of all parking 
 compositions of $n$.  It now follows from (\ref{noncrossbetti1}) that
\begin{equation} \label{noncrossbetti} \beta((\NC_{n+1}  * T_{t,n})^-) = \sum_{\mu \in \pc_n} \sum_{\scriptsize\begin{array}{c}w \in \sg_{M(\mu)} \cap  \NDD_n\\ w_{n-1} \le w_n\end{array}}t^{\des(w)+1}(t+1)^{n-2\des(w) -1} .\end{equation}

 Note that  every parking composition $\mu$ of $n$ can be viewed as an element of $\wcomp_{n,n+1}$ by adjoining a $0$ to the end of  $\mu$.   For  $\mu, \mu^\prime \in \wcomp_{n,n+1}$, we say that $\mu$ and $\mu^\prime$ are cyclically equivalent if   $\mu^\prime $ can be obtained  by cyclically rotating the parts of  $\mu$.  Since all elements of $\wcomp_{n,n+1}$ are primitive words, i.e., they are not equal to a power of a shorter word, the equivalence classes of $\wcomp_{n,n+1}$ under cyclic equivalence all have size equal to $n+1$.  Moreover,  each equivalence class has exactly one  parking composition $\mu$, i.e. $\mu=(\mu_1,\dots,\mu_{n},0)$ where $(\mu_1,\dots,\mu_n) \in \pc_n$.  

Given a weak composition $\mu$ of $n$, let 
$$F_\mu:= \sum_{\scriptsize\begin{array}{c}w \in \sg_{M(\mu)} \cap  \NDD_n\\ w_{n-1} \le w_n\end{array}}t^{\des(w)+1}(t+1)^{n-2\des(w) -1}$$ and let 
$${\bf x}^{\mu} := x_1^{\mu_1} \cdots x_k^{\mu_k},$$
for  $\mu = (\mu_1,\dots, \mu_k)$.
It follows from (\ref{ges4}) that  $\sum_{\mu\in \wcomp_{n,n+1}} F_\mu {\bf x}^\mu $,  is a polynomial in $t$ whose coefficients are symmetric polynomials in the 
variables $x_1,\dots,x_{n+1}$.  Hence $$F_\mu = F_{\mu^{\prime}}$$ whenever $\mu^\prime$ is a 
rearrangement of $\mu$; so $F_\mu$ is constant on cyclic equivalence classes of $\wcomp_{n,n
+1}$.  We can therefore choose  a representative of each cyclic equivalence class of $
\wcomp_{n,n+1}$  to compute the sum of $F_\mu$ over the weak compositions $\mu$  in $
\wcomp_{n,n+1}$.  By letting the parking compositions  be the chosen representatives, we arrive 
at $$ \sum_{\mu \in \wcomp_{n,n+1}} F_\mu = (n+1) \sum_{\mu \in \pc_n}F_\mu.$$  It now follows 
from (\ref{noncrossbetti}) that 

\begin{equation}\label{bettifu}  \beta((\NC_{n+1}  * T_{t,n})^-) = \frac 1 {n+1}  \sum_{\mu \in 
\wcomp_{n,n+1}} F_\mu. \end{equation}

By combining (\ref{ges4}) and (\ref{bannereq}) we have that for all $m,n \in \pp$ and  $\mu \in \wcomp_{n,m}$,
$$F_\mu = \sum_{\scriptsize\begin{array} {c} w \in W_n\\ |w| \in  \sg_{M(\mu)} \end{array}} t^{\bars(w)},$$
which implies that
\begin{equation}\label{Fubar}  \sum_{\mu \in \wcomp_{n,m}} F_\mu = \sum_{\scriptsize\begin{array} {c} w \in W_n\\ |w| \in  [m]^n \end{array}}t^{\bars(w)}.\end{equation} .

We claim that for all $m$ and $n$,
\begin{equation}\label{bardeseq} \sum_{\scriptsize\begin{array} {c} w \in W_n\\ |w| \in  [m]^n \end{array}} t^{\bars(w)} = \sum_{k=0}^{n-1} {n-1 \choose k} t^k \sum_{u \in [m]^{n-k}} t^{\des(u)}.\end{equation}  To prove this claim  first note that there are two types of barred letters in $w \in W_n$. The type I barred letters are those that are followed by a letter equal to it in absolute value and the type II barred letters are those that are followed by a letter that is smaller than it  in absolute value.  The summand on the right side of the equation enumerates banners that have exactly $k$ barred letters of the first type.  To obtain such a banner first choose the $k$ positions from the first $n-1$ positions in which the type I  barred letters are to appear and leave them blank, then fill in the remaining $n-k$ positions with an arbitrary word $u$ in $[m]^{n-k}$, then fill in the $k$ barred positions that were left blank from right to left so that the letter equals its successor in absolute value, and finally put bars over the descent positions of the resulting word, the number of which is clearly equal to $\des(u)$.  

By combining  (\ref{Fubar}) and (\ref{bardeseq}) we obtain
\begin{equation} \label{fubarbetti} \sum_{\mu \in \wcomp_{n,m}} F_\mu = \sum_{k=0}^{n-1} {n-1 \choose k} t^k \sum_{w \in [m]^{n-k}} t^{\des(w)}.\end{equation} 
Now set $m=n+1$ and plug this equation into (\ref{bettifu}) to obtain the desired result (\ref{noncross2}).

{\em Proof of (\ref{noncross1})}.  It follows from (\ref{strank}) and (\ref{minmaxeq1}) that 
$$\beta(\NC_{n+1}^-*T_{t,n-1}) = \sum_{\mu \in \pc_n} G_\mu ,$$ where 
$$G_\mu:= \sum_{\scriptsize\begin{array}{c}w \in \sg_{M(\mu)} \cap  \NDD_n\\ w_{n-1} \le w_n \\ w_1 \le w_2 \end{array}}t^{\des(w)+1}(t+1)^{n-2\des(w) -2}.$$
Using a similar argument to that which was used to derive (\ref{bettifu})  (with (\ref{ges3}) now playing the role of (\ref{ges4})) we obtain 
\begin{equation} \label{betag} \beta(\NC_{n+1}^-*T_{t,n-1}) = \frac 1 {n+1} \sum_{\mu \in \wcomp_{n,n+1}} G_\mu.\end{equation}

For $n \ge 1$, let
$$F_n:= \sum_{\scriptsize\begin{array}{c} w \in \NDD_n\\ w_{n-1} \le w_n \end{array}}  t^{\des(w)} (1+t) ^{n-1-2\des(w)} {\bf x}_w$$
and 
$$G_n:= \sum_{\scriptsize\begin{array}{c} w \in \NDD_n\\ w_1 \le w_2\\ w_{n-1} \le w_n \end{array}}  t^{\des(w)} (1+t) ^{n-2-2\des(w)} {\bf x}_w.$$
Also let $F_0 = G_0 =1$.  By (\ref{ges4}) and (\ref{ges3}) we have
$$\sum_{n \ge 0 }F_n z^n = \sum_{n \ge 0} G_n z^n  \sum_{n\ge 0} h_n z^n.$$  Hence
$$ \sum_{n \ge 0} G_n z^n = \sum_{n \ge 0 }F_n z^n  \sum_{n\ge 0}(-1)^i e_n z^n.$$
Equating coefficients of $z^n$ yields
\begin{equation} \label{gneq} G_n = \sum_{r=0}^n (-1)^r e_r F_{n-r}.\end{equation}
By applying to (\ref{gneq}), the specialization that sets
$$x_i = \begin{cases} 1 & \mbox{ if } i \in [n+1] \\ 0 &\mbox { otherwise}, \end{cases}$$
we obtain

$$ \sum_{\mu \in \wcomp_{n,n+1}} G_\mu = (-1)^n (n+1) + \sum_{r=0}^{n-1}(-1)^r \binom {n+1} r  \sum_{\mu \in \wcomp_{n-r,n+1}} F_\mu.$$  By plugging (\ref{fubarbetti}) into this equation we obtain 
\begin{eqnarray} \nonumber  \sum_{\mu \in \wcomp_{n,n+1}} G_\mu  &=& (-1)^n (n+1) + \\  \nonumber & &  \hspace{-.4in}  \sum_{r=0}^{n-1}(-1)^r \binom {n+1} r \sum_{k=0}^{n-r-1} \binom {n-r-1}k  t^k \sum_{w \in [n+1]^{n-r-k}}t^{\des(w)}.\end{eqnarray}
The desired result (\ref{noncross1})  follows from this and (\ref{betag}).
 \end{proof}

\end{document}